\documentclass[letterpaper, 10 pt, conference]{ieeeconf}

\IEEEoverridecommandlockouts                              

\usepackage{cite}
\usepackage{amsmath,amssymb,amsfonts}
\usepackage{algorithmic}
\usepackage{microtype}
\usepackage{graphicx}
\usepackage{algorithm,algorithmic}
\usepackage{hyperref}
\hypersetup{hidelinks=true}
\usepackage{textcomp}
\usepackage{mathtools}
\usepackage{amsthm}
\usepackage{xcolor}
\usepackage{tabularx}
\theoremstyle{plain} 
\usepackage[table]{xcolor}
\usepackage{tikz}
\usetikzlibrary{arrows.meta,positioning}

\newcommand{\Q}{\mathbb{Q}}
\newcommand{\R}{\mathbb{R}}

\newcommand{\ip}[2]{\left\langle #1,#2\right\rangle}
\newcounter{prob}

\newcommand{\mcl}[1]{\mathcal{#1}}

\newcommand{\mbf}[1]{\mathbf{#1}}

\newcommand{\bmat}[1]{\begin{bmatrix} #1 \end{bmatrix}}
\renewcommand{\ip}[2]{\left\langle #1,#2\right\rangle}

\newcommand{\norm}[1]{\left\lVert{#1}\right\rVert}
 
\newcommand{\err}{\ensuremath{err}} 
\newcommand{\PI}{\pmb{\Pi}}
\newcommand{\pie}{\scalebox{0.9}[1.2]{$\mathit{\Pi}$}}

\newcommand{\fourpi}[4]{\hspace{0.5mm}\pie\hspace{-1.mm}\left[\begin{array}{c|c}
#1&#2\\\hline #3 & \left\{#4\right\}
\end{array}\right]}

\newtheorem{thm}{Theorem}

\newtheorem{defn}[thm]{Definition}
\newtheorem{lem}[thm]{Lemma}
\newtheorem{cor}[thm]{Corollary}

\newtheorem{prop}[thm]{Proposition}

\let\bl\bigl
\let\bbl\Bigl

\let\br\bigr
\let\bbr\Bigr

\title{\LARGE \bf Dual Representations and Proxy Characterization of $H_2$ Norm for Partial Integral Equations}

\author{Danilo Braghini$^{1}$, Tristan Thomas$^{2}$, Sachin Shivakumar$^{3}$, Javad M. Velni$^{2}$, and Matthew M. Peet$^{1}$
\thanks{This work was supported by the National Science Foundation under grants No. 2337751 and 2429973.}
\thanks{$^{1}$ Danilo Braghini\{{\tt\small dbraghini@asu.edu}\} and Matthew M. Peet are with the School for Engineering of Matter, Transport and Energy at Arizona State University, Tempe, AZ, USA. $^{2}$ Tristan Thomas and Javad Velni \{{\tt\small tthoma7@clemson.edu, javadm@clemson.edu}\} are with Department of Mechanical Engineering at Clemson University. $^{3}$Sachin Shivakumar\{{\tt\small sshivakumar@lanl.gov}\} is with CNLS and T-5 Division of Los Alamos National Laboratory.
        }%
}
\allowdisplaybreaks

\begin{document}
\maketitle
\thispagestyle{empty}
\pagestyle{plain}

\begin{abstract}
Duality and dual representations are essential tools for systems theory and controller synthesis. For any finite-dimensional causal linear time-invariant (CLTI) system, there exists a dual CLTI system with identical $H_\infty$ and $H_2$ norm. These relations are found by simply taking the adjoint of the transfer function. For nonlinear, time-varying or infinite-dimensional systems, however, such a transfer function may not exist or be difficult to obtain. For duality in the $H_\infty$ norm, this obstacle is overcome by using the equivalent time-domain, induced $L_2$-norm characterization. For $H_2$ norms there is no such equivalent characterization and hence proxy norms such as induced energy to peak and impulse to energy norms are used. These proxy norms bound the $H_2$ norm in the CLTI case. However, neither of these $H_2$ proxy norms equal the same $H_2$ proxy of the canonical dual representation. In this paper, we overcome this by proposing a dual representation whose induced energy to peak proxy equals the impulse to energy proxy of the primal and vice-versa. This relation is developed in the context of Partial Integral Equation (PIE) representations. The result is then used to establish primal-dual operator inequalities for both proxy norms similar in form to the primal-dual LMIs used to characterize the $H_2$ norm.  
\end{abstract}

\section{Introduction}

Solution of the $H_2$ and $H_\infty$-optimal controller synthesis problems firmly established state-space, optimization, and Linear Matrix Inequalities (LMIs) as essential tools in optimal and robust control. An essential feature in both results, however, was frequency-domain characterization of system norms and transfer to time-domain via, e.g., the Kalman-Yakubovich-Popov (KYP) Lemma. This frequency domain interpretation then allowed for equivalent characterization of norms in terms of a dual system obtained by taking the adjoint of the transfer function, so that $\norm{C(sI-A)^{-1}B}=\norm{B^T(sI-A^T)^{-1}C^T}$ implies that systems $\{A,B,C\}$ and $\{A^T,C^T,B^T\}$ have equivalent $H_2$ and $H_\infty$ norm. When the KYP lemma or $H_2$ equivalent is applied to the dual system, the resulting matrix inequalities admit a convexification step through variable substitution.

When extending these LMI-based methods to $H_2$ and $H_\infty$ analysis and control of nonlinear, time-varying, uncertain, and infinite-dimensional systems, however, the transfer function interpretation fails. Specifically, 
 $H_2$ and $H_\infty$ norms are signal norms -- defined in terms of a transfer function. For systems where no such transfer function exists, the norm is undefined. In the $H_\infty$ case, this may be overcome by using a time-domain characterization as the induced $L_2$ gain. The induced system norm is equivalent to the $H_\infty$ norm for Causal Linear Time-Invariant (CLTI) systems and may be readily characterized using LMIs. Furthermore, for linear  infinite-dimensional systems which admit a Partial Integral Equation (PIE) representation, a dual PIE representation exists with equivalent induced $L_2$ gain. This result was used to design $H_\infty$-optimal state-feedback controllers in~\cite{shivakumar2026}. For the 
 $H_2$--norm, however, no analogous 
 characterization exists~\cite{stoorvogel1993}.

To address this problem, two time-domain \textit{proxies} for the $H_2$-norm have been proposed which bound the $H_2$ for CLTI ODE state-space systems. 
We consider the induced impulse-to-energy and induced energy-to-peak norms. For a system $\dot x=Ax+Bw, z=Cx$, the induced energy-to-peak norm, or generalized $H_2$--norm, is defined as
{
\setlength{\abovedisplayskip}{2pt}
\setlength{\belowdisplayskip}{2pt}
\setlength{\abovedisplayshortskip}{2pt}
\setlength{\belowdisplayshortskip}{2pt}
\[
\|G\|_{\infty,2}:=\sup_{T\geq 0} \sup_{\|w\|_{L_2[0,T]}=1} \norm{z}_{L_\infty[0,T]},
\]
}
and has been used for analysis and control of systems with polytopic uncertainty~\cite{palhares2000}, time-varying parameters~\cite{tan2002}, sampled-data~\cite{kim2017}, time-varying delay~\cite{gao2003}, nonlinearities~\cite{li2017}, and bounded uncertainties
~\cite{schwenkel2026}. 
Unlike the induced $L_2$-norm proxy for $H_\infty$, the induced energy-to-peak norm is asymmetric: the 
transform $\{A,B,C\}\rightarrow\{A^T,C^T,B^T\}$ does not preserve the norm. An alternative $H_2$ proxy is the induced impulse-to-energy norm, defined for the system 
$\dot x=Ax, z=Cx, x(0)=Bv$ as 
\[
\|G\|_{2,i}
:=\sup_{T\geq 0} \sup_{\|v\|_{2}=1} \norm{z}_{L_2[0,T]}.
\]

This proxy was used in~\cite{iwasaki1996} for systems with time-varying uncertainties. Again, the canonical duality transform $\{A,B,C\}\rightarrow\{A^T,C^T,B^T\}$ does not preserve the induced impulse-to-energy norm. The asymmetry poses a challenge to development of dual LMI or operator inequality representations of either norm.

In this paper, we address this problem by proposing a class of systems which encompasses both norms. To allow for generalization to infinite-dimensional systems, we pose this class in the Partial Integral Equation (PIE) representation~\cite{shivakumar2024,peet2021}. This class of systems takes the form $\mcl T\dot x=\mcl Ax+\mcl Bu$, $x(0)=\mcl Hv$, $z=\mcl Cx$ with associated ordered tuple $\{\mcl T, \mcl A,\mcl B,\mcl C,\mcl H\}$ which is a subset of the algebra of Partial Integral (PI) operators. We then define the primal system as the tuple $\{\mcl T, \mcl A,0,\mcl C,\mcl B\}$ and the dual system as $\{\mcl T^*, \mcl A^*,\mcl C^*,\mcl B^*,0\}$. The primal form is then admissible for the impulse-to-energy norm and the dual is admissible for the energy-to-peak norm. Our goal is to show that $\|G(\mcl T,\mcl A,0,\mcl C,\mcl B)\|_{2,i}
    =
    \|G(\mcl T^*,\mcl A^*,\mcl C^*,\mcl B^*,0)\|_{\infty,2}$ and 
$ \|G(\mcl T,\mcl A,\mcl B,\mcl C,0)\|_{\infty,2}
    =\|G(\mcl T^*,\mcl A^*,0,\mcl B^*,\mcl C^*)\|_{2,i}
$.

To show that the proxy of the primal is equivalent to the corresponding proxy of the dual, we first establish an intertwining relationship between solutions of the primal and dual systems -- showing that the inner product of the initial state of the dual and the final state of the primal equals the inner product of the output of the primal with the backward-in-time input of the dual (Lem.~\ref{lem:intertwining}). For CLTI state-space systems, such a relationship can be inferred by combining the results in~\cite{chellaboina2000,bobba2004} with~\cite{peregudin2024} and for semigroups a related involution property can be found in~\cite{tucsnak2009}. Lem.~\ref{lem:intertwining} is used to obtain the desired norm-duality in Thm.~\ref{thm:dualnorm}.

Having established the main result, we then examine the question of whether Thm.~\ref{thm:dualnorm} can be used to define convex conditions 
for controller synthesis. 
To accomplish this, we generalize known primal LMIs for both proxy norms~\cite{iwasaki1996,scherer1997} to operator inequalities. We use the duality result to establish dual versions of these operator inequalities. An operator Schur complement gives two dilated inequalities for each proxy, arranged as alternative primal and dual pairs -- mirroring the structure of the classical $H_2$ LMI formulations.

The results are implemented in PIETOOLS~\cite{manual2025}, which automates the conversion from PDE or delay system to PIE representation and uses semidefinite programming to solve linear operator inequalities. Although developed in the PIE framework, the results may find application in analysis and control of finite-dimensional systems -- 
descriptor or differential algebraic systems which have a similar structure. Finally, the results are tested 
on PDEs and time-delay systems.  


The paper is organized as follows. The problem is formulated in Sec.~\ref{sec:problem}, where the proxy norms are introduced. Sec.~\ref{sec:duality} establishes the intertwining identity and duality theorem. Secs.~\ref{sec:LPIs} and~\ref{sec:dilatedLPIs} present the primal, dual, and dilated operator inequalities. Numerical results are given in Sec.~\ref{sec:numerics} and show that for single-input single output systems with known $H_2$ norm, the bounds given by the operator inequalities are tight.

\noindent\textbf{Notation:} We will use 
uppercase calligraphic font $\mcl A$ for operators. Given Hilbert spaces $X$ and $Y$, $\mcl L(X,Y)$ denote the set of bounded linear operators from $X$ to $Y$. We denote the Hilbert adjoint by $\mcl A^* \in \mcl L(Y,X)$. 
 $L_{\infty}^{n}[\Omega]$, and $L_2^n[\Omega]$ are the spaces of $\R^n$--valued Lebesgue bounded, and Lebesgue square-integrable functions, respectively, on the domain $\Omega$, 
with the standard inner product. $W^{n}_d[\Omega]$ is the Sobolev space of $d$ times $L_2$--differentiable $\R^{n}$--valued functions on $\Omega$. \vspace{-2 mm}
 \section{System Norms and Problem Statement}\label{sec:problem}\vspace{-1 mm}
We consider the problem of computing optimal upper-bounds for proxies of the $H_2$--norm of a linear PDE or delay system expressed using the PIE state-space representation~\cite{shivakumar2024}. PIE representations are parameterized by Partial Integral (PI) operators, defined as follows. 

	\begin{defn}\label{def:4PI}
		Given a matrix $P$ and polynomials $Q_1,Q_2,R_0$, $R_1$, $R_2$, we say that $\mcl P$ is a \textbf{4-PI operator} if
			\begin{align*}\small
				&\left(\mcl P \bmat{x\\\mbf x} \right)(s) := \bmat{Px + \int_{a}^{b}Q_1(\theta)\mbf x(\theta)d\theta\\Q_2(s)x+ \left(\mcl R\mbf x\right) (s)}, \text{where}~\left(\mcl R\mbf x\right)(s)\\
	&\hspace{-0.5mm}=\hspace{-0.5mm}R_0(s) \mbf x(s) +\hspace{-1.5mm}\int_{a}^s  \hspace{-1.5mm}R_1(s,\theta)\mbf x(\theta)d \theta+\hspace{-1.5mm}\int_s^b \hspace{-1.5mm}R_2(s,\theta)\mbf x(\theta)d \theta.		
			\end{align*}
            
            We denote the map from $P,Q_i,R_i \mapsto \mcl P$ as \[\mcl P= \fourpi{P}{Q_1}{Q_2}{R_i}\subset \mcl L(\R^m \times L_2^{n},\R^p \times L_2^{q})\] and, $\mcl P \in \PI$ if $\mcl P$ is a 4-PI operator. 
	\end{defn}
	
	 If $p=m$ and $q=n$, $\PI$ is closed under composition, addition, and 
     adjoint, defining a *-algebra akin to matrices for finite-dimensional state-spaces. Explicit formulae for these operations can be found in~\cite{shivakumar2024}.

We now propose the following parameterized class of systems which will be used to define our $H_2$ proxy norms.

 \begin{defn}\label{def:system}
 Given $\mcl T,\mcl A,\mcl B,\mcl C,\mcl H \in \PI$ and $v$, $w$, we say $z=G(\mcl T,\mcl A,\mcl B,\mcl C,\mcl H)(w,v)$ if there exists $x(t)$ such that 
 \begin{align}
\!\partial_t( \mathcal T x(t)) \!= \!\mathcal A x(t)\!+\!\mcl Bw(t),
   \mathcal T x(0) \!=\! \mcl H v, 
z(t) = \mathcal Cx(t),
    \label{eq:PIE}
\end{align}
  \end{defn}

\noindent\textbf{Note:} The operators $\mcl T,\mcl A,\mcl B,\mcl C,\mcl H$ determine the dimensions of $v,w,z,x$. In this paper, however, we presume $v(t),w(t),z(t)$ are real vector-valued functions 
constrained to the domains of $\mcl H, \mcl B$ and range of $\mcl C$. 
%

We use the following notion of solution~\cite{shivakumar2024}.
\begin{defn}\label{def:piesol}
Given $\mcl{T,A,B, C, H}\in\PI$, 
$\{x,z\}$ is a \textbf{solution of $G(\mcl T,\mcl A,\mcl B,\mcl C,\mcl H)$} if 
$\mcl Tx$ is Fréchet differentiable, and 
$\{z, x$, $w$,$v\}$ satisfy Eqn.~\eqref{eq:PIE}.
\end{defn}
We will presume throughout that the PIE has been obtained from a well-posed PDE or time-delay system and hence is, itself, well-posed in the following sense. 
\begin{defn}\label{def:wellposed}
We say $G(\mcl T,\mcl A,\mcl B,\mcl C,\mcl H)$ is well-posed if, for every $w\in L_2$ and $v \in \R^{n_v}$, there exists a solution $\{x,z\}$ of  $G(\mcl T,\mcl A,\mcl B,\mcl C,\mcl H)$.
\end{defn}

Partial Integral Equations can be used to equivalently represent the state and input-output properties of a large class of PDEs. Specifically, given a PDE, explicit constructions of the operators $\mcl{T,A,B, C, H}$ can be found in~\cite{shivakumar2024}. Conditions for well-posedness may be found in~\cite{jagt2026}.

Since transfer functions (and the corresponding trace norm) are typically unavailable for PDEs/PIEs, formal statements of duality in $H_2$ require the use of proxy system norms, as defined in the following subsection.

\subsection{Impulse-to-Energy Norm of PDEs}
  
In the context of PIEs, induced impulse to energy norm is defined for partial systems of the form $z=G(\mcl T,\mcl A,0,\mcl C,\mcl B)v$, where recall by Defn.~\ref{def:system} this implies
\begin{align}
   \partial_t( \mathcal T x(t)) &= \mathcal A x(t),\;
    \mathcal T x(0)= \mathcal B v,\;
    z(t) = \mathcal Cx(t).
    \label{eq:primal_aux}
\end{align}
for some $x(t)$.

\begin{defn}[Induced impulse-to-energy norm]\label{def:2,i_norm}
Given $\mcl{T,A,B,C} \in \PI$ with $G(\mcl T,\mcl A,0,\mcl C,\mcl B)$ well-posed, define
    \begin{equation}
\|G(\mcl T,\mcl A,0,\mcl C,\mcl B)\|_{2,i}:=   \sup_{T\ge 0} \sup_{\substack{z,\mbf x\, \text{satisfy~\eqref{eq:primal_aux}}\\ \norm{v}_2=1}} \norm{z}_{L_2[0,T]}.
    \label{eq:primal_proxy_norm}
\end{equation}
\end{defn}
Given $z=G(\mcl T,\mcl A,0,\mcl C,\mcl B)v$ -- an autonomous system with initial state $\mcl B v$, $\|G(\mcl T,\mcl A,0,\mcl C,\mcl B)\|_{2,i}\le \gamma$ implies that $\norm{z}_{L_2}\le \gamma \norm{v}$. That is, an impulse $v$, acting through $\mcl B$ produces an initial condition $\mcl Bv$ and the energy of the output is bounded by the size of the initial condition.

\subsection{Energy-to-Peak Norm of PDEs}

In the context of PIEs, energy-to-peak norm is defined for partial systems of the form $z=G(\mcl T,\mcl A,\mcl B,\mcl C,0)v$, where recall by Defn.~\ref{def:system} this implies
    \begin{align}
    \mcl T\dot{x}(t)&=\mcl Ax(t)+\mcl B v(t),\;
    \mcl Tx(0)=0,\; 
    z(t)=\mathcal C x(t).
    \label{eq:dualPIE}
\end{align}

\begin{defn}[Induced energy-to-peak norm]\label{def:inf,2_norm}
    Given $\mcl{T,A,B,C} \in \PI$ with $G(\mcl T,\mcl A,\mcl B,\mcl C,0)$ well-posed, define
    \begin{equation}
\|G(\mcl T,\mcl A,\mcl B,\mcl C,0)\|_{\infty,2}=\sup_{T >0}
   \sup_{\substack{v,x\, \text{satisfy~\eqref{eq:dualPIE}}\\\norm{v}_{L_2[0,T]}=1} } \|z (T)\|.
    \label{eq:dual_norm}
\end{equation}
\end{defn}

Given $z=G(\mcl T,\mcl A,\mcl B,\mcl C,0)v$ -- a system with input $v$ and zero initial state, $\|G(\mcl T,\mcl A,\mcl B,\mcl C,0)\|_{\infty,2}\le \gamma$ implies that $\norm{z}_{L_\infty} \le \gamma \norm{v}_{L_2}$. That is, the finite-energy input $v$ produces a bounded output $z$, and the peak value of the output is bounded by the energy of the input.

\subsection{$\|G\|_{2,i}$ and $\|G\|_{\infty,2}$ are proxies for $\norm{G}_{H_2}$ }

In the context of ODEs, induced impulse-to-energy and energy to peak norms  have been used as a proxy for $H_2$ norm for systems where transfer functions need not exists. However, when transfer functions exist and $H_2$ norm is finite, the two proxies are equivalent to (but not equal to) the $H_2$ norm. This can be expressed in the partial PIE system construction as the following extension of well-known results for ODEs~\cite{peregudin2024}.
\begin{prop}\label{prop:H2equi_PIE}
Given $\mcl{T,A,B, C}\in\PI$, where the domain of $\mcl B$ is $\R^{n_w}$, and the image of $\mcl C$ is $\R^{n_z}$, suppose $\hat G(s)=\mcl C(s\mcl T-~\mcl A)^{-1}\mcl B$ exists. 
Then we have the following.
    \[ \hspace{-2mm}
\|G(\mcl T,\mcl A,0,\mcl C,\mcl B)\|_{2,i} \le\hspace{-1mm} \|\hat G\|_{H_2}\hspace{-1mm}\le \sqrt{n_w} \|G(\mcl T,\mcl A,0,\mcl C,\mcl B)\|_{2,i}
\]
  \[\hspace{-3mm}
\|G(\mcl T,\mcl A,\mcl B,\mcl C,0)\|_{\infty,2} \le\hspace{-1mm} \|\hat G\|_{H_2}\hspace{-1mm}\le \sqrt{n_z} \|G(\mcl T,\mcl A,\mcl B,\mcl C,0)\|_{\infty,2}
\]
\end{prop}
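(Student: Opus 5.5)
The plan is to reduce both chains of inequalities to statements about the impulse response of the system and its Gramians, mirroring the classical ODE argument of~\cite{peregudin2024}. The key object is the matrix-valued impulse response $g(t)$ with $\hat G(s)=\int_0^\infty g(t)e^{-st}dt$. For the impulse-to-energy case we have $g(t)v=z(t)$, where $z=G(\mcl T,\mcl A,0,\mcl C,\mcl B)v$; this holds because the Laplace transform of the solution of~\eqref{eq:primal_aux} is $(s\mcl T-\mcl A)^{-1}\mcl Bv$, using well-posedness and the existence of the resolvent. By Parseval, $\|\hat G\|_{H_2}^2=\int_0^\infty \mathrm{tr}(g(t)^Tg(t))dt=\mathrm{tr}(W_o)$, where $W_o:=\int_0^\infty g(t)^Tg(t)dt\in\R^{n_w\times n_w}$ is a finite-dimensional positive semidefinite matrix. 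Similarly, $\|\hat G\|_{H_2}^2=\mathrm{tr}(W_c)$ with $W_c:=\int_0^\infty g(t)g(t)^Tdt\in\R^{n_z\times n_z}$.

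First inequality chain. For $\norm{v}_2=1$ and any $T$, $\norm{z}_{L_2[0,T]}^2\le \int_0^\infty v^Tg^Tgv\,dt=v^TW_ov$. Hence $\|G\|_{2,i}^2=\lambda_{\max}(W_o)$, since the supremum over $T$ recovers the full integral by monotone convergence. We then use $\lambda_{\max}(W_o)\le \mathrm{tr}(W_o)\le n_w\lambda_{\max}(W_o)$ and take square roots to get $\|G\|_{2,i}\le\|\hat G\|_{H_2}\le\sqrt{n_w}\|G\|_{2,i}$.

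Second inequality chain. For~\eqref{eq:dualPIE} with zero initial state, the output is the convolution $z(T)=\int_0^T g(T-\tau)v(\tau)d\tau$. This again follows from the Laplace-domain relation $\hat z=\hat G\hat v$ and causality. For any unit vector $e\in\R^{n_z}$, Cauchy--Schwarz gives $e^Tz(T)\le (\int_0^T e^Tg(\sigma)g(\sigma)^Te\,d\sigma)^{1/2}\norm{v}_{L_2[0,T]}$. Equality is attained by $v(\tau)=g(T-\tau)^Te$, suitably normalized. Taking the supremum over $e$ and $T$ gives $\|G\|_{\infty,2}^2=\lambda_{\max}(W_c)$. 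The same trace--eigenvalue sandwich with dimension $n_z$ then yields the second chain.

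The main obstacle is justifying the impulse-response and convolution representations in the PIE setting. We need to know that solutions in the sense of Defn.~\ref{def:piesol} coincide with the inverse Laplace transform of $(s\mcl T-\mcl A)^{-1}\mcl B$, and that $g\in L_2$ whenever $\|\hat G\|_{H_2}<\infty$. I would handle this by taking the existence of $\hat G$ as the hypothesis that $(s\mcl T-\mcl A)^{-1}$ is defined and analytic on the open right half-plane. Applying the Laplace transform to~\eqref{eq:PIE}, using $\mathcal L(\partial_t\mcl Tx)=s\mcl T\hat x-\mcl Tx(0)$ and Fréchet differentiability of $\mcl Tx$, then identifies $\hat z$ with $\hat G v$ or $\hat G\hat v$. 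Paley--Wiener then gives $g\in L_2[0,\infty)$ with $\|g\|_{L_2}=\|\hat G\|_{H_2}$. Uniqueness of solutions, implied by well-posedness of the underlying PDE, is needed so that the supremum in the norm definitions is over this single trajectory.
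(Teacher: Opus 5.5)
Your proof is correct and is essentially the argument the paper relies on: the paper gives no proof of its own and only calls the proposition an extension of the ODE results in \cite{peregudin2024}, which is precisely the Gramian argument you lift to PIEs by writing each proxy as $\lambda_{\max}$ of $\int g^Tg$ or $\int gg^T$ for the matrix-valued impulse response $g$, identifying $\|\hat G\|_{H_2}^2$ with the common trace, and using $\lambda_{\max}\le\mathrm{tr}\le n\lambda_{\max}$. The technical points you flag (Laplace-transformability and uniqueness of PIE solutions, and finiteness of $\|\hat G\|_{H_2}$ so that Paley--Wiener applies) are exactly the hypotheses the paper leaves implicit in ``suppose $\hat G$ exists.''
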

\subsection{Problem Formulation}

The goal of the paper is to relate the impulse-to-energy norm of a PDE system to the energy-to-peak norm of its dual. Specifically, given an input-output PIE representation of the system, as per Defn.~\ref{def:system}, and the norms of Defn.~\ref{def:2,i_norm} and Defn.~\ref{def:inf,2_norm}, we want to prove the following symmetries.
\[
\|G(\mcl T,\mcl A,0,\mcl C,\mcl B)\|_{2,i}
    =
    \|G(\mcl T^*,\mcl A^*,\mcl C^*,\mcl B^*,0)\|_{\infty,2},
\]
\[
\|G(\mcl T,\mcl A,\mcl B,\mcl C,0)\|_{\infty,2}
    =\|G(\mcl T^*,\mcl A^*,0,\mcl B^*,\mcl C^*)\|_{2,i}.
\]
This result is established in Thm.~\ref{thm:dualnorm} and included in Table~\ref{tab:input-output-norms} along with previously established primal-dual relations for $H_{\infty}$~\cite{shivakumar2026} and impulse-to-peak norm~\cite{thomas2026}.
%
\begin{table*}
\centering
\caption{Relations between induced system norms for primal and dual systems, classified by input and output spaces. The diagonal boxes indicate ``self-duality'' relations where norm is preserved between input/output spaces and primal/dual representations -- citations are included for these results.}
\label{tab:input-output-norms}
\renewcommand{\arraystretch}{1.15}
\setlength{\tabcolsep}{8pt}
\begin{tabular}{|c|c|c|c}
\hline
\textbf{Output $\backslash$ Input}
&
\textbf{Energy ($L_2$)}
&
\textbf{Impulse (Initial Condition)}
\\
\hline\rule{0pt}{2.2em}
\textbf{Energy ($L_2$)}
&
\shortstack{Energy-to-energy (see \cite{shivakumar2026})\\
$\|G(\mcl T,\mcl A,\mcl B,\mcl C,0)\|_{H_{\infty}}= \|G(\mcl{T^*,A^*,C^*,B^*},0)\|_{H_\infty}$}
&
\shortstack{Impulse-to-energy (Thm.~\ref{thm:dualnorm})\\
$\|G(\mcl T, \mcl A,0,
\mcl C,\mcl B)\|_{2,i}= \|G(\mcl T^*,\mcl A^*,\mcl C^*,\mcl B^*,0)\|_{\infty,2}$}
\\
\hline
\rule{0pt}{2.2em}
\textbf{Peak ($L_\infty$)}
&
\shortstack{Energy-to-peak (Thm.~\ref{thm:dualnorm})\\
$\|G(\mcl T, \mcl A,\mcl B,\mcl C,0)\|_{\infty,2}=\|G(\mcl T^*,\mcl A^*,0,\mcl B^*,\mcl C^*)\|_{2,i}$} 
&
\shortstack{Impulse-to-peak (see \cite{thomas2026})\\
$\|G(\mcl T, \mcl A,0,\mcl C,\mcl B)\|_{\infty,i}=\|G(\mcl{T^*,A^*},0,\mcl{B^*,C^*})\|_{\infty,i}$}
\\
\hline
\end{tabular}
\end{table*}

Based on this duality result, we propose linear PI operator inequality (LPI) conditions for both the impulse-to-energy norm (Thm.~\ref{thm:LPI_2,i}) and the energy to peak norm (Thm.~\ref{thm:LPI_inf,2}). These results are combined with a Schur complement in Hilbert spaces to obtain both primal and dual LPIs for the impulse-to-energy norm in Thm.~\ref{thm:LPIs_2,i_dilated} and for the energy-to-peak norm in Thm.~\ref{thm:LPIs_inf,2_dilated}. The resulting inequalities can potentially be used for optimal control and estimator synthesis, and generalize existing LMIs for ODEs.

\section{Duality in $\norm{\cdot}_{e,i}$ and $\norm{\cdot}_{\infty,2}$}\label{sec:duality}


In this section, we establish the main duality result. This is based on the following intertwining property showing that the inner product of the initial state of the dual system $G(\mcl T^*,\mcl A^*,0,\mcl B^*,\mcl C^*)$ and the final state of the primal system ($G(\mcl T, \mcl A,\mcl B,\mcl C,0)$) equals the inner product of the output of the primal with the backward-in-time input of the dual. 
\begin{lem}\label{lem:intertwining}
    Given $\mcl{T,A,B,C} \in \PI$, suppose $x,v$ satisfy Eqn.~\eqref{eq:primal_aux} for $\{\mcl T,\mcl A,\mcl B,\mcl C\}$, and $\bar x,\bar v$ satisfy Eqn.~\eqref{eq:dualPIE} for $\{\mcl T^*, \mcl A^*,\mcl C^*,\mcl B^*\}$. Then, for any $T>0$, we have the following
\begin{align}
    \label{eq:intertwining2}
 \bl(\mcl B^* \bar x(T) \br)^T v=  \int_0^T\ip{\bar v(T-t)}{\mcl C x(t)}dt.
\end{align}
\end{lem}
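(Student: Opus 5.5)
The identity is a continuous-time ``adjoint pairing'' argument. I would pair the primal trajectory, run forward in time, with the dual trajectory, run backward in time, and show that a suitable inner product is constant up to the dual input term. Concretely, fix $T>0$ and define the time-reversed dual state $y(t):=\bar x(T-t)$ for $t\in[0,T]$. By Eqn.~\eqref{eq:dualPIE} applied to $\{\mcl T^*,\mcl A^*,\mcl C^*,\mcl B^*\}$, this $y$ satisfies
\[
-\partial_t(\mcl T^* y(t)) = \mcl A^* y(t) + \mcl C^*\bar v(T-t), \qquad \mcl T^* y(T)=0 .
\]
The primal $x$ satisfies $\partial_t(\mcl T x(t))=\mcl A x(t)$ with $\mcl T x(0)=\mcl B v$.

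The key object is the scalar function $\phi(t):=\ip{\mcl T x(t)}{y(t)}=\ip{x(t)}{\mcl T^* y(t)}$. I would differentiate it using both representations, taking the derivative of $\mcl T x$ in the first slot and of $\mcl T^* y$ in the second. By a product rule,
\[
\dot\phi(t)=\ip{\partial_t(\mcl T x)}{y}+\ip{x}{\partial_t(\mcl T^*y)}
=\ip{\mcl A x}{y}-\ip{x}{\mcl A^*y}-\ip{x}{\mcl C^*\bar v(T-t)}
=-\ip{\mcl C x(t)}{\bar v(T-t)} .
\]
The $\mcl A$ terms cancel by the definition of the adjoint. Integrating over $[0,T]$ then gives
\[
\phi(T)-\phi(0)=-\int_0^T\ip{\bar v(T-t)}{\mcl C x(t)}dt .
\]
For the endpoints, $\phi(T)=\ip{x(T)}{\mcl T^*\bar x(0)}=0$ because the dual has zero initial state. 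Also $\phi(0)=\ip{\mcl B v}{\bar x(T)}=(\mcl B^*\bar x(T))^T v$. Substituting these yields Eqn.~\eqref{eq:intertwining2}.

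The main obstacle is justifying the product rule. The solution concept in Defn.~\ref{def:piesol} only gives Fréchet differentiability of $\mcl T x$ and $\mcl T^*\bar x$, not of $x$ or $\bar x$ individually, and $\mcl T$ is typically not invertible. So I cannot simply write $\dot\phi=\ip{\dot x}{\mcl T^*y}+\ip{x}{\partial_t\mcl T^* y}$.

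My first attempt would be a difference-quotient argument. Write
\[
\phi(t+h)-\phi(t)=\ip{\mcl T x(t+h)-\mcl T x(t)}{y(t+h)}+\ip{x(t)}{\mcl T^*y(t+h)-\mcl T^*y(t)} .
\]
This splits the increment so that each factor is differenced only through the operator whose image is differentiable. Dividing by $h$ and letting $h\to0$ then requires continuity of $y$ (and of $x$) in $L_2$, which I would take from well-posedness of the underlying system. If that continuity is not available directly, I would fall back on the weak, integrated form: integrate the PIEs against test functions and use the fundamental theorem of calculus for the Fréchet-differentiable maps $\mcl T x$ and $\mcl T^*\bar x$. The remaining steps are routine adjoint manipulations and a change of variables.
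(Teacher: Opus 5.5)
Your proposal is correct and is essentially the paper's argument. Your $\phi(t)=\ip{\mcl T x(t)}{\bar x(T-t)}$ is exactly the paper's $V(T-t)$, where $V(t)=\ip{\bar x(t)}{\mcl T x(T-t)}$; you time-reverse the dual instead of the primal, and the adjoint cancellation of the $\mcl A$ terms, the integration over $[0,T]$, and the endpoint evaluation via $\mcl T x(0)=\mcl B v$ and $\mcl T^*\bar x(0)=0$ are the same. The paper does the product-rule differentiation formally and does not raise the regularity issue, so your difference-quotient discussion adds rigor rather than departing from its proof.
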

\begin{proof}
    Suppose $x,v$ satisfy Eqn.~\eqref{eq:primal_aux}, and $\bar v,\bar x$ satisfy Eqn.~\eqref{eq:dualPIE}. Define $V(t)
=\left\langle
\bar x(t),\mathcal T x(T-t)
\right\rangle$ for $t \in [0,T]$. Then
\begin{align*}
\dot V(t)&=\ip{\mathcal T^*\dot{\bar x}(t)}{x(T-t)}
-
\ip{\bar x(t)}{\mathcal T\dot x(T-t)}
\\
&=
\ip{
\mathcal A^*\bar x(t)
+\mathcal C^*\bar v(t)}{x(T-t)}-
\ip{\bar x(t)}{\mathcal A x(T-t)}\\
&=
\ip{\bar v(t)}{ \mcl Cx(T-t) }.
\end{align*}
Hence, integrating in $t$ over $[0,T]$ we have 
\begin{align*}
&V(T)-V(0)=\int_0^T\ip{\bar v(t)}{ \mcl Cx(T-t) }dt\\
&=-\int_T^0\ip{\bar v(T-s)}{ \mcl Cx(s) }ds=\int_0^T\ip{\bar v(T-s)}{ \mcl Cx(s) }ds
\end{align*}
where we used the change of variables $s=T-t$. Now from Eqn.~\eqref{eq:primal_aux}, we have $\mcl T x(0)= \mathcal B v$ and from Eqn.~\eqref{eq:dualPIE}, we have $\mcl T^*\bar x(0)=0$. Thus
\begin{align*}
&\bl(\mcl B^* \bar x(T) \br)^T v=\ip{\bar x(T)}{\mcl B v}=\ip{\bar x(T)}{\mcl T x(0)}\\&
=\ip{\bar x(T)}{\mcl T x(0)}-\ip{\mcl T^*\bar x(0)}{x(T)}\\
&=\ip{\bar x(T)}{\mcl T x(0)}-\ip{\bar x(0)}{\mcl Tx(T)}=V(T)-V(0)\\
&=\int_0^T\ip{\bar v(T-t)}{\mcl C x(t)}dt
\end{align*}
as desired.
\end{proof}
This intertwining property is use to obtain the following main result.
\begin{thm}\label{thm:dualnorm}
Given $\mcl{T,A,B,C}\in \PI$, let $G(\mcl T,\mcl A,0,\mcl C,\mcl B)$ and $G(\mcl T^*,\mcl A^*,\mcl C^*,\mcl B^*,0)$ be well-posed. Then we have that
\[
    \|G(\mcl T,\mcl A,0,\mcl C,\mcl B)\|_{2,i}
    =
    \|G(\mcl T^*,\mcl A^*,\mcl C^*,\mcl B^*,0)\|_{\infty,2}.
\]
\end{thm}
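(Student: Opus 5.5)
We prove the equality as two inequalities, using Lem.~\ref{lem:intertwining} as a bilinear pairing between primal solutions and dual solutions. Throughout, let $z=\mcl Cx$ be the primal output generated by an impulse $v$ with $\norm{v}_2=1$ via \eqref{eq:primal_aux}. Let $\bar z=\mcl B^*\bar x$ be the dual output generated by an input $\bar v\in L_2[0,T]$ via \eqref{eq:dualPIE} for $\{\mcl T^*,\mcl A^*,\mcl C^*,\mcl B^*\}$. Fix $T>0$. The lemma gives $\bar z(T)^Tv=\int_0^T\ip{\bar v(T-t)}{z(t)}dt$, so the map $\bar v\mapsto \bar z(T)$ acts as the adjoint of $v\mapsto z|_{[0,T]}$ with a time reversal. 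We will read off both inequalities from this.

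\emph{Step 1 (dual $\le$ primal).} Take any admissible $\bar v$ with $\norm{\bar v}_{L_2[0,T]}=1$. If $\bar z(T)\neq 0$, choose $v=\bar z(T)/\norm{\bar z(T)}$; well-posedness of the primal gives a solution $x$. The lemma and Cauchy--Schwarz in $L_2[0,T]$ then give
\[
\norm{\bar z(T)} = \int_0^T\ip{\bar v(T-t)}{z(t)}dt \le \norm{\bar v}_{L_2[0,T]}\norm{z}_{L_2[0,T]}\le \|G(\mcl T,\mcl A,0,\mcl C,\mcl B)\|_{2,i}.
\]
Here we use that time reversal preserves the $L_2[0,T]$ norm. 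Taking suprema over $\bar v$ and $T$ gives $\|G(\mcl T^*,\mcl A^*,\mcl C^*,\mcl B^*,0)\|_{\infty,2}\le \|G(\mcl T,\mcl A,0,\mcl C,\mcl B)\|_{2,i}$.

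\emph{Step 2 (primal $\le$ dual).} Take $v$ with $\norm{v}=1$ and a primal solution with $z\neq 0$ on $[0,T]$. Define the reversed, normalized input $\bar v(t)=z(T-t)/\norm{z}_{L_2[0,T]}$, which lies in $L_2[0,T]$ with unit norm. Well-posedness of the dual gives a solution $\bar x$. The lemma then gives
\[
\norm{z}_{L_2[0,T]}=\int_0^T\ip{\bar v(T-t)}{z(t)}dt=\bar z(T)^Tv\le\norm{\bar z(T)}\le \|G(\mcl T^*,\mcl A^*,\mcl C^*,\mcl B^*,0)\|_{\infty,2}.
\]
Taking suprema gives the reverse inequality, and combining the two steps yields the theorem.

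\emph{Main obstacle.} The delicate point is admissibility in Step 2. The reversed output $z(T-\cdot)$ must be a legitimate input for the dual, that is, in $L_2[0,T]$ and in the domain of $\mcl C^*$, so that the definition of well-posedness (stated for $w\in L_2$) provides a solution. Since $z=\mcl Cx$ with $\mcl C$ a bounded PI operator mapping into $\R^{n_z}$, it is enough to know that $x$ is locally $L_2$ in time, and a solution in the sense of Defn.~\ref{def:piesol} supplies this. A second, more technical point is that Lem.~\ref{lem:intertwining} differentiates $V(t)$ and moves $\mcl T$ across the inner product. I would take this as already justified by the lemma's hypotheses rather than reprove it. Finally, the degenerate cases $z\equiv0$ and $\bar z(T)=0$ are trivial, and the suprema over $T$ commute with the bounds because each bound holds for every fixed $T$.
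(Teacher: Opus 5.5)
Your proof is correct and is essentially the paper's argument. Both directions use Lem.~\ref{lem:intertwining} with exactly the same test choices followed by Cauchy--Schwarz: $v=\mcl B^*\bar x(T)/\norm{\mcl B^*\bar x(T)}$ for the dual-$\le$-primal bound, and the normalized time-reversed output $\bar v(t)=z(T-t)/\norm{z}_{L_2[0,T]}$ for the reverse bound. Your explicit treatment of the degenerate cases $\bar z(T)=0$ and $z\equiv 0$ is a small improvement, since the paper divides by these norms without checking that they are nonzero.
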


\begin{proof}

Recall that   
\[
\|G(\mcl T^*,\mcl A^*,\mcl C^*,\mcl B^*,0)\|_{\infty,2}=    \sup_{T>0}
   \sup_{\substack{v,x\, \text{satisfy~\eqref{eq:dualPIE}}\\\norm{v}_{L_2[0,T]}=1} } \|\mcl B^* x(T)\|,
\]
\[
\text{and }\;\|G(\mcl T,\mcl A,0,\mcl C,\mcl B)\|_{2,i}:= \sup_{T>0}   \sup_{\substack{v,z,\mbf x\text{satisfy~\eqref{eq:primal_aux}}\\ \norm{v}_2=1}} \norm{z}_{L_2[0,T)}.
\]


To show that $\|G(\mcl T^*,\mcl A^*,\mcl C^*,\mcl B^*,0)\|_{\infty,2}\le \|G(\mcl T,\mcl A,0,\mcl C,\mcl B)\|_{2,i}$, we establish that for any $T>0$ and any $\bar{v}$, $\bar{x}$ which satisfy satisfy Eq. (5) with $\norm{\bar{v}}_{L_2[0,T]}=1$, we have that $|\mcl B^* x(T)|\le \|G(\mcl T,\mcl A,0,\mcl C,\mcl B)\|_{2,i}$.

Specifically, for $T>0$ and any $\bar{v}$, $\bar{x}$ which satisfy Eq. (5) with $\norm{\bar{v}}_{L_2[0,T]}=1$, let $v = \frac{1}{\norm{\mcl B^*\bar{x}(T)}}\mcl B^*\bar{x}(T)$ and $x,z$ satisfy Eq. (2) for this $v$. Then from Lem.~\ref{lem:intertwining},
\begin{align*}
    \norm{\mcl B^*\bar{x}(T)} &= (\mcl B^*\bar{x}(T))^Tv= \int_0^T\ip{v(T-t)}{\mcl Cx(t)}dt\\
    &\hspace{-1.5cm}\le\! \norm{v}_{L_2[0,T]}\!\norm{z}_{L_2[0,T]}\! =\!\norm{z}_{L_2[0,T]}\!\le\! \|G(\mcl T,\mcl A,0,\mcl C,\mcl B)\|_{2,i}
\end{align*}
as desired, where the final inequality holds from the definition of $\|G(\mcl T,\mcl A,0,\mcl C,\mcl B)\|_{2,i}$ and the fact that $v,x,z$ satisfy Eq. (2) with $\norm{v}=1$. Since the bound holds uniformly for any $T>0$ and any $\bar{v}$, $\bar{x}$ which satisfy Eq. (5) with $\norm{\bar{v}}_{L_2[0,T]}=1$, we conclude that
\begin{align*}
\|G(\mcl T^*,\mcl A^*,\mcl C^*,\mcl B^*,0)\|_{\infty,2}&=    \sup_{T>0}
   \sup_{\substack{v,x\, \text{satisfy~\eqref{eq:dualPIE}}\\\norm{v}_{L_2[0,T]}=1} } |\mcl B^* x(T)|\\
   &\le \|G(\mcl T,\mcl A,0,\mcl C,\mcl B)\|_{2,i}.
\end{align*}

Now, to show that $\|G(\mcl T,\mcl A,0, \mcl C,\mcl B)\|_{2,i}\le \|G(\mcl T^*,\mcl A^*,\mcl C^*,\mcl B^*,0)\|_{\infty,2}$, we similarly establish that for any $T>0$ and any $v$, $z$ $x$ which satisfy Eq.~\eqref{eq:primal_aux} with $\norm{v}=1$, we have that $\norm{z}_{L_2[0,T)} \le \|G(\mcl T^*,\mcl A^*,\mcl C^*,\mcl B^*,0)\|_{\infty,2}$.

Specifically, for $T>0$ and any $v$, $z$ $x$ which satisfy Eq.~\eqref{eq:primal_aux} with $\norm{v}=1$, let $\bar v(t) = \frac{1}{\norm{z}_{L_2[0,T)}}z(T-t)$ and $\bar x$ satisfy Eq.~\eqref{eq:dualPIE} for this $\bar v$. Then from Lem.~\ref{lem:intertwining},
\begin{align*}
\norm{z}_{L_2[0,T]}&=\frac{1}{\norm{z}_{L_2[0,T)}}\int_0^T \ip{z(t)}{z(t)}dt\\
&\hspace{-1.4cm}=\int_0^T \ip{\bar v(T-t)}{\mcl C x (t)}dt=\bl(\mcl B^* \bar x(T) \br)^T v\\
&\hspace{-1.4cm}\le \norm{\mcl B^* \bar x(T) }\!\norm{ v}\!=\!\norm{\mcl B^* \bar x(T) }\!\le\! \|G(\mcl T^*,\mcl A^*,\mcl C^*,\mcl B^*,0)\|_{\infty,2}
\end{align*}
where the final inequality holds from the definition of $\|G(\mcl T^*,\mcl A^*,\mcl C^*,\mcl B^*,0)\|_{\infty,2}$ and the fact that $\bar v,\bar x$ satisfy Eq.~\eqref{eq:dualPIE} with $\norm{\bar v}=1$. Since the bound holds uniformly for any $T>0$ and any  $v$, $z$ $x$ which satisfy Eq.~\eqref{eq:primal_aux} with $\norm{v}=1$, we conclude that
\begin{align*}
\|G(\mcl T,\mcl A,0,\mcl C,\mcl B)\|_{2,i}&=    \sup_{T>0}   \sup_{\substack{v,z,\mbf x\text{satisfy~\eqref{eq:primal_aux}}\\ \norm{v}_2=1}} \norm{z}_{L_2[0,T)}\\
&\le \|G(\mcl T^*,\mcl A^*,\mcl C^*,\mcl B^*,0)\|_{\infty,2}
\end{align*}
as desired. 
We conclude that $\|G(\mcl T,\mcl A,0,\mcl C,\mcl B)\|_{2,i}=\|G(\mcl T^*,\mcl A^*,\mcl C^*,\mcl B^*,0)\|_{\infty,2}$.
\end{proof}

We are now in position to address the question of how to obtain optimal bounds on $\|G(\mcl T,\mcl A,0,\mcl C,\mcl B)\|_{2,i}$ and $\|G(\mcl T,\mcl A,\mcl B,\mcl C,0)\|_{\infty,2}$.

\section{Operator Inequalities for $\norm{G}_{2,i}$ and $\norm{G}_{\infty,2}$}\label{sec:LPIs}
Having proven the main result in Thm.~\ref{thm:dualnorm}, we will apply this result to obtain both primal and dual forms of linear operator inequality (LPI) which upper bound the $\norm{\cdot}_\mbf{2,i}$ (Thm.~\ref{thm:LPI_2,i}, Sec.~\ref{thm:LPIs_2,i_dilated}) and $\norm{\cdot}_\mbf{\infty,2}$ (Thm.~\ref{thm:LPI_inf,2}, Sec.~\ref{thm:LPIs_2,i_dilated}). Before proceeding, however, we first use Lyapunov arguments to obtain primal undilated operator inequalities for  $\norm{G}_{2,i}$ and $\norm{G}_{\infty,2}$ --  generalizing results obtained in~\cite{iwasaki1996} and~\cite{scherer1997}, respectively.\vspace{-1 mm}

\subsection{An LPI for Impulse-to-Energy Norm}\label{sec:e2p}\vspace{-1 mm}

In this Subsection, we provide an LPI which characterizes $\norm{\cdot}_\mbf{2,i}$, based on a generalization of Thm.~1 in~\cite{iwasaki1996}.
%


\begin{thm}\label{thm:LPI_2,i}
    Given $\mcl{T,A,B,C} \in \PI$, suppose there exist $\mcl P=\mcl P^* \in \PI$, and $\gamma, \epsilon>0$, such that $\mcl P \succcurlyeq \epsilon I$, satisfying \begin{align}
\mathcal A^*\mathcal P\mathcal T
+\mathcal T^*\mathcal P\mathcal A
+
\mathcal C^*\mathcal C
&\preccurlyeq 0,\;
\mathcal B^*\mathcal P\mathcal B
\preccurlyeq \gamma I.
\label{eq:lpi1b}
\end{align}
Then, $\norm{G(\mcl T,\mcl A,0,\mcl C,\mcl B)}_{2,i}^2 \leq \gamma$. 
\end{thm}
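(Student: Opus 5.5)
We use a Lyapunov argument with the storage functional $V(t)=\ip{\mcl T x(t)}{\mcl P\mcl T x(t)}$, where $x$ is any solution of Eqn.~\eqref{eq:primal_aux} associated with $v$ and $\norm{v}_2=1$. Since $\mcl Tx$ is Fréchet differentiable (Defn.~\ref{def:piesol}) and $\mcl P$ is bounded and self-adjoint, $V$ is differentiable. Using $\partial_t(\mcl Tx(t))=\mcl Ax(t)$ we get
\begin{align*}
\dot V(t)&=\ip{\mcl A x(t)}{\mcl P\mcl T x(t)}+\ip{\mcl T x(t)}{\mcl P\mcl A x(t)}\\
&=\ip{x(t)}{(\mcl A^*\mcl P\mcl T+\mcl T^*\mcl P\mcl A)x(t)}.
\end{align*}
The first inequality in \eqref{eq:lpi1b} then gives $\dot V(t)\le -\ip{x(t)}{\mcl C^*\mcl C x(t)}=-\norm{z(t)}^2$.

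Next we integrate this bound over $[0,T]$ for an arbitrary $T\ge 0$. This yields $V(T)-V(0)\le -\norm{z}_{L_2[0,T]}^2$. Because $\mcl P\succcurlyeq\epsilon I\succcurlyeq 0$, we have $V(T)\ge 0$, so $\norm{z}_{L_2[0,T]}^2\le V(0)$.

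The initial condition $\mcl Tx(0)=\mcl Bv$ gives $V(0)=\ip{\mcl Bv}{\mcl P\mcl Bv}=v^T\mcl B^*\mcl P\mcl Bv$. The second inequality in \eqref{eq:lpi1b} then gives $V(0)\le\gamma\norm{v}^2=\gamma$. Hence $\norm{z}_{L_2[0,T]}^2\le\gamma$ for every $T$ and every admissible $v$ with $\norm{v}=1$. Taking suprema as in Defn.~\ref{def:2,i_norm} gives $\norm{G(\mcl T,\mcl A,0,\mcl C,\mcl B)}_{2,i}^2\le\gamma$.

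The main obstacle is justifying the derivative computation. The solution notion only guarantees that $\mcl Tx$ is differentiable, not $x$ itself. The functional is therefore written in terms of $\mcl Tx$, so only $\partial_t(\mcl Tx)$ is needed. The chain rule for $t\mapsto\ip{\mcl Tx(t)}{\mcl P\mcl Tx(t)}$ then follows from boundedness of $\mcl P$ on $\R^m\times L_2$. We also need $t\mapsto \norm{\mcl Cx(t)}^2$ to be integrable so the fundamental theorem of calculus applies; this we take from well-posedness. Note that $\epsilon>0$ is only used through $\mcl P\succcurlyeq0$, to ensure $V(T)\ge0$.
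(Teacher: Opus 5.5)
Your proof is correct and follows essentially the same route as the paper. The shared steps are: the storage functional $\ip{\mcl Tx(t)}{\mcl P\mcl Tx(t)}=\ip{x(t)}{\mcl T^*\mcl P\mcl T x(t)}$, differentiation using $\partial_t(\mcl Tx)=\mcl Ax$ together with the first inequality, integration over $[0,T]$ while discarding $V(T)\ge 0$, and bounding $V(0)=\ip{\mcl Bv}{\mcl P\mcl Bv}\le\gamma$ with the second inequality. Your remark that only Fr\'echet differentiability of $\mcl Tx$ (not of $x$) is needed is a useful clarification that the paper leaves implicit.
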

\begin{proof}
We prove that if the inequalities hold for some $\mcl P\succcurlyeq \epsilon I, \gamma>0$, then $\norm{G(\mcl T,\mcl A,0,\mcl C,\mcl B)}_{2,i}^2 \leq \gamma$, recalling that
\[
\|G(\mcl T,\mcl A,0,\mcl C,\mcl B)\|_{2,i}:= \sup_{T>0}   \sup_{\substack{v,z,\mbf x\text{satisfy~\eqref{eq:primal_aux}}\\ \norm{v}_2=1}} \norm{z}_{L_2[0,T)}.
\]
Suppose there exist $\mcl P\succcurlyeq \epsilon I, \gamma>0$ satisfying 
Eqn.~\eqref{eq:lpi1b}. 
For any $T>0$, let $z, x, v$ satisfy Eqn.~\eqref{eq:primal_aux} for $t\in[0,T]$, given some initial condition with $\norm{v}_2=1$. Define the storage functional $V(x(t))=\ip{ x(t)}{\mcl{T^*PT}x(t)}_{L_2[a,b]}$. Since $\mcl P$ is coercive, $V(x)\ge \epsilon \norm{\mcl Tx}^2$ for any $x\in L_2$. Then, taking time derivative along solutions of Eqn.~\eqref{eq:primal_aux}, we have that
    \begin{align*}
        \dot V(x(t))&=\ip{ x(t)}{(\mcl{A^*PT+T^*PA}), x(t)}_{L_2[a,b]},
    \end{align*}
    and, since $\mcl P$ satisfies Eqn.~\eqref{eq:lpi1b}, we further have that
    \[
    \dot V( x(t)) \leq -\ip{x(t)}{\mcl{C^*C} x(t)}_{L_2[a,b]}.
    \]
    Integrating $\dot{V}$ w.r.t. $t$ over $[0,T]$, 
    we have that
    \begin{align*}
  \norm{z}_{L_2[0,T]}^2&=\int_0^T\ip{x(s)}{\mcl{C^*C}x(s)}ds \\
    &\leq V(x(0))- V(x(T))\leq V(x(0))\\
    &=\ip{x(0)}{\mcl{T^*PT}x(0)}=\ip{\mcl B v}{\mcl{PB} v}
    \end{align*}
Then, since $\mcl P, \gamma$ satisfy Eqn.~\eqref{eq:lpi1b}, we have that
   \[
    \norm{z}_{L_2[0,T]}^2 \leq \bl|\ip{\mcl B v}{ \mcl{PB} v}\br| \bbr|\leq  \gamma \norm{v}_2^2=\gamma.
   \]
    Since the inequality holds uniformly for any $T>0$ and solution of Eqn. \eqref{eq:primal_aux}, by the definition of the impulse-to-energy norm, we conclude that
$\norm{G(\mcl{T,A},0,C,B)}_{2,i}^2 \leq \gamma$, as desired.
\end{proof}

\subsection{An LPI for the Energy-to-Peak Norm}\label{sec:e2p}
In this Subsection, we provide an operator inequality which characterizes $\norm{\cdot}_\mbf{\infty,2}$, based on a generalization of the result in Sec.~III- D of~\cite{scherer1997}.

\begin{thm}\label{thm:LPI_inf,2}
 Given $\mcl{T,A,B,C} \in \PI$, suppose there exist $\mcl P=\mcl P^* \in \PI$, and $\gamma, \epsilon>0$, such that $\mcl P \succcurlyeq \epsilon I$, satisfying
\begin{align}
         \bmat{-\gamma I & \mcl{B^*PT}\\ \mcl{T^* P B} & \mcl{A^*PT+T^*PA}} &\preccurlyeq0, \;
        \mcl{C^*C}-\mcl{T^*PT}
        \preccurlyeq0.\label{eq:lpi1d}
     \end{align}
Then, $\norm{G(\mcl{T,A,B,C},0)}_{\infty,2}^2 \leq \gamma$
\end{thm}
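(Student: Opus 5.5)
We argue directly with a Lyapunov functional, in the same way as the proof of Thm.~\ref{thm:LPI_2,i}, but now with the input $v$ active and zero initial state. Fix $T>0$, and let $v,x,z$ satisfy Eqn.~\eqref{eq:dualPIE} on $[0,T]$ with $\norm{v}_{L_2[0,T]}=1$. Define the storage functional $V(t)=\ip{\mcl Tx(t)}{\mcl P\mcl Tx(t)}$. Since $\mcl Tx(0)=0$, we have $V(0)=0$, and since $\mcl P\succcurlyeq\epsilon I$, we have $V(t)\ge 0$. Differentiating along solutions and using $\partial_t(\mcl Tx)=\mcl Ax+\mcl Bv$ gives
\[
\dot V(t)=\ip{x(t)}{(\mcl A^*\mcl P\mcl T+\mcl T^*\mcl P\mcl A)x(t)}+2\ip{v(t)}{\mcl B^*\mcl P\mcl Tx(t)}.
\]

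The first inequality in Eqn.~\eqref{eq:lpi1d}, tested against the vector $\bmat{v(t)\\ x(t)}$, states exactly that
\[
-\gamma\norm{v(t)}^2+2\ip{v(t)}{\mcl B^*\mcl P\mcl Tx(t)}+\ip{x(t)}{(\mcl A^*\mcl P\mcl T+\mcl T^*\mcl P\mcl A)x(t)}\le 0.
\]
Hence $\dot V(t)\le\gamma\norm{v(t)}^2$. Integrating over $[0,T]$ and using $V(0)=0$ yields
\[
V(T)\le\gamma\norm{v}_{L_2[0,T]}^2=\gamma .
\]
One technical point needs care here: the Fréchet differentiability in Defn.~\ref{def:piesol} is for $\mcl Tx$, not $x$. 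So we should write $V$ in terms of $\mcl Tx$ and express its derivative as $2\ip{\partial_t(\mcl Tx)}{\mcl P\mcl Tx}$, exactly as in Thm.~\ref{thm:LPI_2,i}. This is routine.

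Next we convert the state bound into a bound on the output at the final time, using the second inequality in Eqn.~\eqref{eq:lpi1d}:
\[
\norm{z(T)}^2=\ip{x(T)}{\mcl C^*\mcl Cx(T)}\le\ip{x(T)}{\mcl T^*\mcl P\mcl Tx(T)}=V(T)\le\gamma .
\]
This bound holds uniformly in $T>0$ and over all admissible $v$ with unit $L_2$ norm. Taking the suprema in Defn.~\ref{def:inf,2_norm} then gives $\norm{G(\mcl T,\mcl A,\mcl B,\mcl C,0)}_{\infty,2}^2\le\gamma$.

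The main point requiring attention is the middle step: reading the $2\times2$ block operator inequality as a pointwise-in-time quadratic form in $(v(t),x(t))$. This needs the cross term to appear with the correct adjoint pairing, so that $\ip{\mcl Bv}{\mcl P\mcl Tx}$ appears twice with real symmetric sign. Since $\mcl P=\mcl P^*$ and all spaces are real, this holds. The remaining steps mirror the proof of Thm.~\ref{thm:LPI_2,i}.
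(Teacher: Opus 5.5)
Your proposal is correct and follows the paper's proof essentially step for step. It uses the same storage functional $\ip{x(t)}{\mcl T^*\mcl P\mcl T x(t)}$ (your $\ip{\mcl Tx}{\mcl P\mcl Tx}$), applies the block inequality to $(v(t),x(t))$ to get $\dot V\le\gamma\norm{v(t)}^2$, integrates using $\mcl Tx(0)=0$, and uses the second inequality to obtain $\norm{z(T)}^2\le V(T)\le\gamma$; your remark about differentiating through $\mcl Tx$ rather than $x$ is a sensible bit of care that the paper leaves implicit.
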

\begin{proof}
 We prove that if there exist $\mcl P \succcurlyeq \epsilon I, \gamma >0$ satisfying 
 Eqn.~\eqref{eq:lpi1d}, then $\|G(\mcl T,\mcl A,\mcl B,\mcl C,0)\|_{\infty,2}^2 \leq \gamma$, recalling that
\[\|G(\mcl T,\mcl A,\mcl B,\mcl C,0)\|_{\infty,2}=    \sup_{T >0}
   \sup_{\substack{v,x\, \text{satisfy~\eqref{eq:dualPIE}}\\\norm{v}_{L_2[0,T]}=1} } \|z (T)\|.\]

     Suppose there exist $\mcl P\succcurlyeq \epsilon I, \gamma>0$ satisfying 
     Eqn.~\eqref{eq:lpi1d}. For any $T>0$, suppose $z,x, v$ satisfy Eqn.~\eqref{eq:dualPIE}, given some input with $\norm{v}_{L_2[0,T]} = 1$. 
     Define the storage functional \(V(x(t))=\ip{x(t)}{ \mcl{T^* PT} x(t)}.\) Then, $V(x)\ge \epsilon \norm{\mcl T\bar{x}}^2$ and 
   {\small
    \begin{align*}
        \dot V(x(t))=\ip{x(t)}{(\mcl{A^* PT+T^*PA}) x(t)}+2\ip{v}{\mcl{B^*PT} x(t)},
    \end{align*}}
implying that, since $\mcl P$ and $\gamma$ satisfy Eqn.~\eqref{eq:lpi1d},
    \begin{align*}
            \dot V(x(t))&-\gamma \norm{v(t)}_2^2=\\
            &\hspace{-4mm}\ip{\bmat{v(t)\\ x(t)}}{ \bmat{-\gamma I & \mcl{B^*PT}\\\mcl{T^*PB}& \mcl{A^*PT+T^*PA}}\bmat{v(t)\\ x(t)}} \leq 0.
    \end{align*}
Integrating $\dot{V}$ w.r.t. $t$ over $[0,T]$, we get
\begin{align*}
V(x(T))-V(x(0))-\gamma \norm{v}_{L_2[0,T]}^2 \leq 0.
\end{align*}
Then, since $\mcl P$ and $\gamma$ satisfy Eqn.~\eqref{eq:lpi1d}, we have that
\begin{align*}
          \norm{z(T)}_2^2
          &=
          \ip{x(T)}{ \mcl{C^*C}x(T)}\leq\ip{x(T)}{\mcl{T^*PT} x(T)}\\&= V(x(T))
          =V(x(T)-V(x(0))\leq\gamma\norm{v}^2=\gamma
    \end{align*}

    Since the inequality holds uniformly for any $T>0$ and solutions of Eqn. \eqref{eq:dualPIE}, 
by the definition of the energy-to-peak norm, we conclude that $
\norm{G(\mcl{T,A,B,C},0)}_{\infty,2}^2 \leq \gamma$, as desired.  
\end{proof}

\vspace{- 1mm}

\section{Primal/Dual LPIs for $\|G\|_{2,i}$, and $\|G\|_{\infty,2}$}\label{sec:dilatedLPIs}\vspace{-1 mm}

The LPIs in Thms.~\ref{thm:LPI_2,i} and~\ref{thm:LPI_inf,2} are undilated and in primal form (i.e. expressed using $\mcl A^* \mcl P$). The convexification step in controller synthesis requires a dilated dual form. To obtain the dual form in each case, we combine the primal LPI with the duality of norms from Thm.~\ref{thm:dualnorm}. We then dilate the resulting LPIs using the following generalization of the Schur complement Lemma~\cite{braghini2025b}.

\begin{lem}\label{lem:schur} Given Hilbert spaces $X$ and $Y$, $\mcl R\in\mcl L(X)$, $\mcl S\in\mcl L(Y)$, and $\mcl U\in\mcl L( Y, X)$ with $\mcl S \succcurlyeq \epsilon I$ for some $\epsilon>0$. Then the following are equivalent.
\begin{enumerate} 
\item 
$\displaystyle \bmat{\mcl S&\mcl U\\\mcl U^*&\mcl R}\succcurlyeq 0 $.
\item $\displaystyle \mcl R-\mcl U^* \mcl S^{-1} \mcl U \succcurlyeq 0$.
\end{enumerate} 
\end{lem}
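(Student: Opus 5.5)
We prove the Schur complement statement, Lem.~\ref{lem:schur}, by a congruence transformation. This mirrors the matrix proof and uses only that $\mcl S$ is boundedly invertible. Since $\mcl S\in\mcl L(Y)$ is self-adjoint with $\mcl S\succcurlyeq \epsilon I$, it is bounded below and has dense range (its kernel is trivial and $\mcl S=\mcl S^*$). Hence $\mcl S$ is bijective, and by the bounded inverse theorem $\mcl S^{-1}\in\mcl L(Y)$. Moreover $\mcl S^{-1}$ is self-adjoint and satisfies $\mcl S^{-1}\succcurlyeq 0$. Establishing this invertibility is the one genuinely infinite-dimensional point and is the step we flag as the main obstacle. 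Coercivity, not mere positivity, is what makes it work.

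Next define the bounded operator
\[
\mcl E=\bmat{I & -\mcl S^{-1}\mcl U\\ 0 & I}\in \mcl L(Y\times X).
\]
It is invertible, with inverse obtained by replacing $-\mcl S^{-1}\mcl U$ with $+\mcl S^{-1}\mcl U$. Expanding the product block by block and using $(\mcl S^{-1})^*=\mcl S^{-1}$ gives
\[
\mcl E^*\bmat{\mcl S&\mcl U\\\mcl U^*&\mcl R}\mcl E=\bmat{\mcl S & 0\\ 0 & \mcl R-\mcl U^*\mcl S^{-1}\mcl U}.
\]

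Since $\mcl E$ is a bijection, $\ip{w}{\mcl M w}\ge 0$ for all $w$ holds if and only if $\ip{\mcl E w'}{\mcl M\mcl E w'}\ge0$ for all $w'$. Congruence by $\mcl E$ therefore preserves positive semidefiniteness in both directions.

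For the direction $(1)\Rightarrow(2)$, take $w'=(0,x)$ in the block-diagonal form. This gives $\ip{x}{(\mcl R-\mcl U^*\mcl S^{-1}\mcl U)x}\ge 0$ for every $x\in X$.

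For the direction $(2)\Rightarrow(1)$, the block-diagonal operator is positive semidefinite because both $\mcl S\succcurlyeq0$ and $\mcl R-\mcl U^*\mcl S^{-1}\mcl U\succcurlyeq 0$. Transforming back by $\mcl E^{-1}$ yields (1).

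As a sanity check, the direction $(1)\Rightarrow(2)$ can also be verified directly. For any $x\in X$, apply (1) to the test vector $(-\mcl S^{-1}\mcl U x,\,x)$. Expanding the quadratic form produces exactly $\ip{x}{(\mcl R-\mcl U^*\mcl S^{-1}\mcl U)x}$.
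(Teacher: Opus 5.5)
Your argument is correct. The paper does not prove this lemma; it imports it from~\cite{braghini2025b}, so there is no in-text proof to compare against. Your congruence by $\mcl E$ (equivalently, completing the square with the test vector $(-\mcl S^{-1}\mcl U x,x)$) is the standard matrix proof, and it makes the lemma self-contained. You correctly identify coercivity as what makes $\mcl S^{-1}$ bounded. The bounded inverse theorem is not even needed here, since $\norm{\mcl S y}\ge\epsilon\norm{y}$ gives $\norm{\mcl S^{-1}}\le\epsilon^{-1}$ directly.

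Beyond the citation, your route exposes two hypotheses the statement leaves implicit:
\begin{enumerate}
\item You use $\mcl S^{-1}\mcl U\colon X\to Y$, i.e.\ $\mcl U\in\mcl L(X,Y)$. This is the only well-typed reading of the block operator on $Y\times X$ and of $\mcl U^*\mcl S^{-1}\mcl U\in\mcl L(X)$. It also matches how the paper applies the lemma (e.g.\ $\mcl S=\gamma I$, $\mcl U=\mcl B^*$, $\mcl R=\mcl T\mcl P\mcl T^*$). So the $\mcl L(Y,X)$ in the statement is a typo.
\item Your assumption $\mcl S=\mcl S^*$ is load-bearing, because the off-diagonal cancellation uses $(\mcl S^{-1})^*=\mcl S^{-1}$. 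In a real Hilbert space, a non-symmetric $\mcl S$ with $\ip{y}{\mcl S y}\ge\epsilon\norm{y}^2$ can satisfy (2) in the quadratic-form sense while (1) fails. For example, on $\R^2$ take $\mcl U=I$, $\mcl S=\bmat{1&a\\-a&1}$ with $a\neq 0$, and $\mcl R=(1+a^2)^{-1}I$; the test vector $y=-x$ makes the form in (1) negative. So self-adjointness must be read into the notation $\succcurlyeq$. This is the standard convention, and it holds in every application in the paper.
\end{enumerate}
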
 

\subsection{Primal and Dual LPIs for $\|G\|_{2,i}$}
The first undilated result is an application of Thm. \ref{thm:dualnorm} to Thm.~\ref{thm:LPI_inf,2}.

\begin{cor}\label{cor:LPI_2,i_dual}
 Given $\mcl{T,A,B,C} \in \PI$, suppose there exist $\mcl P=\mcl P^* \in \PI$, and $\gamma, \epsilon>0$, such that $\mcl P \succcurlyeq \epsilon I$, satisfying
\begin{align}
         \bmat{-\gamma I & \mcl{CPT^*}\\ \mcl{T P C^*} & \mcl{APT^*+TPA^*}} &\preccurlyeq0,\;
        \mcl{BB^*}-\mcl{TPT^*}
        \preccurlyeq0.\label{eq:lpi6b_}
     \end{align}

Then, $\norm{G(\mcl{T,A},0,\mcl C,\mcl B)}_{2,i}^2 \leq \gamma$
\end{cor}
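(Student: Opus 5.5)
The plan is to apply Thm.~\ref{thm:LPI_inf,2} to the dual tuple $\{\mcl T^*,\mcl A^*,\mcl C^*,\mcl B^*\}$ and then transfer the resulting bound back to the primal using Thm.~\ref{thm:dualnorm}. Since $\PI$ is closed under adjoints, $\mcl T^*,\mcl A^*,\mcl C^*,\mcl B^*\in\PI$, and $(\mcl T^*)^*=\mcl T$ etc. Hence Thm.~\ref{thm:LPI_inf,2} can be invoked with the substitutions $\mcl T\mapsto\mcl T^*$, $\mcl A\mapsto \mcl A^*$, $\mcl B\mapsto \mcl C^*$, $\mcl C\mapsto\mcl B^*$.

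\textbf{Checking the hypotheses of Thm.~\ref{thm:LPI_inf,2}.} Under these substitutions, the first inequality of Eqn.~\eqref{eq:lpi1d} becomes
\[
\bmat{-\gamma I & (\mcl C^*)^*\mcl P\mcl T^*\\ (\mcl T^*)^*\mcl P\mcl C^* & (\mcl A^*)^*\mcl P\mcl T^*+(\mcl T^*)^*\mcl P\mcl A^*}=\bmat{-\gamma I & \mcl{CPT^*}\\ \mcl{TPC^*} & \mcl{APT^*+TPA^*}}\preccurlyeq 0.
\]
The second inequality becomes $(\mcl B^*)^*\mcl B^*-(\mcl T^*)^*\mcl P\mcl T^*=\mcl{BB^*}-\mcl{TPT^*}\preccurlyeq 0$. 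These are exactly the two inequalities in Eqn.~\eqref{eq:lpi6b_}. With the same $\mcl P\succcurlyeq\epsilon I$ and $\gamma$, Thm.~\ref{thm:LPI_inf,2} therefore gives
\[
\norm{G(\mcl T^*,\mcl A^*,\mcl C^*,\mcl B^*,0)}_{\infty,2}^2\le\gamma.
\]

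\textbf{Transferring the bound to the primal.} Thm.~\ref{thm:dualnorm} gives $\norm{G(\mcl T,\mcl A,0,\mcl C,\mcl B)}_{2,i}=\norm{G(\mcl T^*,\mcl A^*,\mcl C^*,\mcl B^*,0)}_{\infty,2}$. Squaring this identity yields the claimed bound $\norm{G(\mcl T,\mcl A,0,\mcl C,\mcl B)}_{2,i}^2\le\gamma$.

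\textbf{Main obstacle.} The adjoint bookkeeping above is routine. The real issue is that Thm.~\ref{thm:dualnorm} requires both $G(\mcl T,\mcl A,0,\mcl C,\mcl B)$ and $G(\mcl T^*,\mcl A^*,\mcl C^*,\mcl B^*,0)$ to be well-posed, and the corollary's statement does not list this. I would adopt the paper's standing presumption that the PIEs, and their duals, come from well-posed systems, and state this assumption explicitly.

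If that assumption is unavailable, I would fall back on a direct argument. Take a primal solution and let $\bar v$ be its time-reversed normalized output, as in the proof of Thm.~\ref{thm:dualnorm}. Existence of the corresponding dual solution $\bar x$ is precisely the well-posedness needed, so the assumption cannot be avoided entirely. Given such a solution, the Lyapunov estimate from the proof of Thm.~\ref{thm:LPI_inf,2} applies to it, and Lem.~\ref{lem:intertwining} then yields the bound on the primal output.
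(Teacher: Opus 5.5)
Your proposal is correct and follows the paper's proof essentially line for line. The paper sets $\mcl T'=\mcl T^*$, $\mcl A'=\mcl A^*$, $\mcl B'=\mcl C^*$, $\mcl C'=\mcl B^*$, checks that the corollary's inequalities are exactly those of Thm.~\ref{thm:LPI_inf,2} for this primed tuple, and then transfers the resulting $\|\cdot\|_{\infty,2}$ bound to $\|G(\mcl T,\mcl A,0,\mcl C,\mcl B)\|_{2,i}$ via Thm.~\ref{thm:dualnorm} and $(\mcl T^*)^*=\mcl T$ (and likewise for the other operators). Your caveat that this step needs the dual system to be well-posed is accurate: it is the direction of Thm.~\ref{thm:dualnorm} that builds dual solutions from time-reversed primal outputs, and the paper leaves this implicit under its standing well-posedness presumption.
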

\begin{proof}
    As before, we first prove that if 
    Eqn.~\eqref{eq:lpi6b_} hold for some $\mcl P\succcurlyeq \epsilon I, \gamma>0$, then $\norm{G(\mcl{T^*,A^*},\mcl C^*,\mcl B^*,0)}_{\infty,2}^2 \leq \gamma$, and use the duality result in Thm.~\ref{thm:dualnorm}. 
    Suppose there exist $\mcl P\succcurlyeq \epsilon I, \gamma>0$ satisfying 
    Eqn.~\eqref{eq:lpi6b_}.
    Now, let $\mcl{T'=T^*}$, $\mcl{A'=A^*}$, $\mcl{B'=C^*}$, and $\mcl{C'=B^*}$. Since $\mcl{T,A,B,C} \in \PI \subset \mcl L(X)$ and $(A^*)^*=A$ for any $A\in \mcl L(X)$, we can verify that the conditions of Thm.~\ref{thm:LPI_inf,2} hold for $\mcl T',\mcl A',\mcl B',\mcl C'$ since
    \begin{align*}
                 \bmat{-\gamma I & \mcl{(B')^*PT'}\\ \mcl{(T')^* P B'} & \mcl{(A')^*PT'+(T')^*PA'}} &\preccurlyeq0\;\\
        \mcl{(C')^*C'}-\mcl{(T')^*PT'}&
        \preccurlyeq 0,
    \end{align*}    
    implying \(\norm{G(\mcl T',\mcl A',\mcl B',\mcl C',0)}_{\infty,2}^2\leq \gamma.\)
Now, from Thm.~\ref{thm:dualnorm}, we can conclude that
\begin{align*}
    \gamma&\ge \norm{G(\mcl T'\!,\!\mcl A'\!,\!\mcl B'\!,\mcl C',0)}_{\infty,2}^2\\
    &=\norm{G(\mcl{(T')^*\!,\!(A')^*}\!,0,\!\mcl{(B')^*\!,\!(C')^*})}_{2,i}^2\\
    &=\norm{G(\mcl{(T^*)^*,(A^*)^*},0,\mcl{(C^*)^*,(B^*)^*})}_{2,i}^2\\
    &=\norm{G(\mcl{T,A},0,\mcl{C,B})}_{2,i}^2.
\end{align*}
\end{proof}\vspace{- 3mm}

We now dilate the LPIs from Thm.~\ref{thm:LPI_2,i} and Cor.~\ref{cor:LPI_2,i_dual} combined with the norm duality result in Thm.~\ref{thm:dualnorm} to obtain the following primal/dual characterization of $\|G\|_{2,i}$. 

\begin{thm}\label{thm:LPIs_2,i_dilated}
    Given $\mcl{T,A,B,C} \in \PI$, suppose at least one of the following 
    statements hold.
    \begin{enumerate}
        \item There exist $\mcl P=\mcl P^* \in \PI$, and $\gamma, \epsilon>0$, such that $\mcl P \succcurlyeq \epsilon I$,  satisfying, \begin{align}
\bmat{-\gamma I & \mcl C \\\mcl C^* &\mathcal A^*\mathcal P\mathcal T
+\mathcal T^*\mathcal P\mathcal A}
&\preccurlyeq 0, \,
\bmat{\mcl P &\mcl{PB}\\ \mcl{B^*P} &\gamma I } &\succcurlyeq 0
\label{eq:lpi2b}
\end{align}
\item  There exist $\mcl P=\mcl P^* \in \PI$, and $\gamma,\epsilon>0$, such that $\mcl P \succcurlyeq \epsilon I $, satisfying
\begin{align}
         \bmat{-\gamma I & \mcl{CPT^*}\\ \mcl{T P C^*} & \mcl{APT^*+TPA^*}} &\preccurlyeq0, \,
         \bmat{\gamma I & \mcl B^*\\\mcl B &  \mcl{TPT^*}}
         \succcurlyeq 0\label{eq:lpi3b}.
     \end{align}
    \end{enumerate}

Then, 
$\norm{G(\mcl T,\mcl A,0,\mcl C,\mcl B)}_{2,i}
\leq \gamma$.
\end{thm}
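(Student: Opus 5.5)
The plan is to reduce each statement to one of the undilated results already proved. Statement 1 will be reduced to Thm.~\ref{thm:LPI_2,i} and Statement 2 to Cor.~\ref{cor:LPI_2,i_dual}. The reduction uses the operator Schur complement (Lem.~\ref{lem:schur}) followed by the rescaling $\tilde{\mcl P}=\gamma\mcl P$. The rescaling is needed because the dilated inequalities are homogeneous in $\gamma$ in a different way than the undilated ones. As a result, the undilated results will be invoked with the bound $\gamma^2$ in place of $\gamma$. That yields $\norm{G}_{2,i}^2\le\gamma^2$, which is the claimed $\norm{G}_{2,i}\le\gamma$.

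\emph{Statement 1.} I will negate the first inequality in \eqref{eq:lpi2b} and apply Lem.~\ref{lem:schur} with $\mcl S=\gamma I$ (coercive since $\gamma>0$), $\mcl U=-\mcl C$ and $\mcl R=-(\mcl A^*\mcl P\mcl T+\mcl T^*\mcl P\mcl A)$. This gives
\[
\mcl A^*\mcl P\mcl T+\mcl T^*\mcl P\mcl A+\tfrac{1}{\gamma}\mcl C^*\mcl C\preccurlyeq 0.
\]
For the second inequality I will take $\mcl S=\mcl P\succcurlyeq\epsilon I$, $\mcl U=\mcl P\mcl B$ and $\mcl R=\gamma I$. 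This gives $\gamma I-\mcl B^*\mcl P\mcl P^{-1}\mcl P\mcl B=\gamma I-\mcl B^*\mcl P\mcl B\succcurlyeq 0$. Setting $\tilde{\mcl P}=\gamma\mcl P\succcurlyeq\gamma\epsilon I$ and multiplying both results by $\gamma$ yields
\[
\mcl A^*\tilde{\mcl P}\mcl T+\mcl T^*\tilde{\mcl P}\mcl A+\mcl C^*\mcl C\preccurlyeq 0,\qquad \mcl B^*\tilde{\mcl P}\mcl B\preccurlyeq\gamma^2 I.
\]
These are exactly the hypotheses of Thm.~\ref{thm:LPI_2,i} with $\gamma^2$ in place of $\gamma$. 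Hence $\norm{G(\mcl T,\mcl A,0,\mcl C,\mcl B)}_{2,i}^2\le\gamma^2$.

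\emph{Statement 2.} The first inequality in \eqref{eq:lpi3b} is negated and Schur-complemented with $\mcl S=\gamma I$. This gives
\[
-(\mcl A\mcl P\mcl T^*+\mcl T\mcl P\mcl A^*)-\tfrac1\gamma\mcl T\mcl P\mcl C^*\mcl C\mcl P\mcl T^*\succcurlyeq 0.
\]
For the second inequality I first swap the block order by conjugating with the permutation $\bmat{0&I\\I&0}$, so that the coercive block $\gamma I$ plays the role of $\mcl S$. Then $\mcl T\mcl P\mcl T^*-\tfrac1\gamma\mcl B\mcl B^*\succcurlyeq 0$. With $\tilde{\mcl P}=\gamma\mcl P$, multiplying both results by $\gamma$ gives
\[
-(\mcl A\tilde{\mcl P}\mcl T^*+\mcl T\tilde{\mcl P}\mcl A^*)-\tfrac{1}{\gamma^2}\mcl T\tilde{\mcl P}\mcl C^*\mcl C\tilde{\mcl P}\mcl T^*\succcurlyeq 0,\qquad \mcl B\mcl B^*-\mcl T\tilde{\mcl P}\mcl T^*\preccurlyeq 0.
\]
Applying Lem.~\ref{lem:schur} in reverse with $\mcl S=\gamma^2 I$ recovers the block inequality
\[
\bmat{-\gamma^2 I&\mcl C\tilde{\mcl P}\mcl T^*\\ \mcl T\tilde{\mcl P}\mcl C^*&\mcl A\tilde{\mcl P}\mcl T^*+\mcl T\tilde{\mcl P}\mcl A^*}\preccurlyeq 0.
\]
These are the hypotheses of Cor.~\ref{cor:LPI_2,i_dual} with $\gamma^2$. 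Since that corollary already incorporates the duality Thm.~\ref{thm:dualnorm}, we obtain $\norm{G(\mcl T,\mcl A,0,\mcl C,\mcl B)}_{2,i}^2\le\gamma^2$.

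\emph{Main obstacle.} The step I expect to need the most care is the bookkeeping of the scaling. Applied naively, the dilated conditions give $\gamma$ where the undilated theorems expect $\gamma^2$, or they mismatch the multiplier on $\mcl P$. Choosing $\tilde{\mcl P}=\gamma\mcl P$ together with the $\gamma^2$ bound resolves both issues simultaneously. I will also check the two side conditions needed along the way. First, $\tilde{\mcl P}$ remains a self-adjoint coercive element of $\PI$, which holds because $\PI$ is closed under scalar multiplication. Second, each Schur step uses a coercive $\mcl S$, namely $\gamma I$, $\gamma^2 I$, or $\mcl P\succcurlyeq\epsilon I$, as Lem.~\ref{lem:schur} requires.
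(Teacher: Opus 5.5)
Your proposal is correct and follows the paper's route. Like the paper, you apply Schur complements via Lem.~\ref{lem:schur}, rescale to $\hat{\mcl P}=\gamma\mcl P$, and invoke Thm.~\ref{thm:LPI_2,i} (statement 1) or Cor.~\ref{cor:LPI_2,i_dual} (statement 2) with bound $\gamma^2$. The differences are cosmetic: in statement 2 the paper gets the $-\gamma^2 I$ block inequality by multiplying the original block inequality by $\gamma>0$, so your Schur/reverse-Schur round trip is equivalent but unnecessary. Likewise, no permutation is needed for $\bmat{\gamma I & \mcl B^*\\ \mcl B & \mcl{TPT^*}}$, since $\gamma I$ already sits in the $\mcl S$ position of Lem.~\ref{lem:schur}.
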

\begin{proof}


We prove each statement separately

\noindent \textbf{Proof of $1)$}:  Suppose $1)$ holds for some $\mcl P \succcurlyeq \epsilon I, \gamma>0$. 
Then, $\mcl P \in \PI \subset \mcl L(X)$, and $\mcl P,\gamma$ satisfy Eqn.~\eqref{eq:lpi2b}. Now, we apply Lem.~\ref{lem:schur} using $\mcl U=-\mcl C \in \mcl L(X)$, $\mcl R=-(\mcl{A^*PT+T^*PA}) \in \mcl L(X)$, and $\mcl S = \gamma I \in \mcl L(X)$. Since $\mcl S \succcurlyeq \epsilon I$ for any $\epsilon\in(0,\gamma]$, the conditions of Lem.~\ref{lem:schur} are satisfied. Then, we have that 
\[
\bmat{-\mcl S&-\mcl U\\-\mcl U^*&-\mcl R}= \bmat{-\gamma I  & \mcl C\\\mcl C^*& \mcl{A^*PT+T^*PA} }\preccurlyeq0,
\]
\[
 \text{and hence}\,\bmat{\mcl S&\mcl U\\\mcl U^*&\mcl R}\succcurlyeq0.
\]
Then, statement 1) of Lem.~\ref{lem:schur} holds. This implies
\[
-\bbl( \mcl{A^*PT+T^*PA+C^*C}\frac{1}{\gamma}\bbr)=\mcl{R-U^*}\mcl S^{-1}\mcl U \succcurlyeq 0.
\]

 Furthermore, $\mcl P,\gamma$ satisfy Eqn.~\eqref{eq:lpi2b}.
    Now, we apply Lem.~\ref{lem:schur} using $\mcl U=\mcl{PB} \in \mcl L(X)$, $\mcl R=\gamma I \in \mcl L(X)$, and $\mcl S = \mcl P \in \mcl L(X)$. Then, since $\mcl S =\mcl P \succcurlyeq \epsilon I$, the conditions of Lem.~\ref{lem:schur} are satisfied. Then, we have that
    \[
    \bmat{\mcl S&\mcl U\\\mcl U^*&\mcl R}= \bmat{\mcl P & \mcl{PB}\\\mcl{B^*P}&\gamma I}\succcurlyeq0,
    \]
    and hence, statement 1) of Lem.~\ref{lem:schur} holds. This implies
    \[
 \gamma I -\mcl{B^*PB} = \gamma I -\mcl{B^*PP}^{-1}\mcl{PB}=\mcl{R-U^*}\mcl S^{-1}\mcl U \succcurlyeq 0.
\]
  Next, let $\hat{\mcl P} =\mcl P\gamma$. Then, we have that $\hat{\mcl P} \succcurlyeq \gamma \epsilon I$ and
 \begin{align*}
        \mcl{A^*\hat{P}T+T^* \hat{P} A+C^*C}  &\preccurlyeq0,
        \; \gamma^2 I-\mcl{B^*}\hat{\mcl P}\mcl{B} \succcurlyeq 0, 
    \end{align*}
and hence $\mcl{\hat P}=\mcl P \gamma$, and $\gamma^2$ satisfy the conditions of Thm.~\ref{thm:LPI_2,i}. This implies that $\norm{G(\mcl T,\mcl A,0,\mcl C,\mcl B)}_{2,i}^2 \leq \gamma^2$,  and hence
$\norm{G(\mcl T,\mcl A,0,\mcl C,\mcl B)}_{2,i} \leq \gamma$, as desired. 

 \noindent \textbf{Proof of $2)$}: Next, we prove statement $2)$ by showing that if there exist $\mcl P \succcurlyeq \epsilon I, \gamma >0$ satisfying $2)$, then $\|G(\mcl T^*,\mcl A^*,\mcl C^*,\mcl B^*,0)\|_{\infty,2} \leq \gamma$, recalling that
\[\|G(\mcl T,\mcl A,\mcl B,\mcl C,0)\|_{\infty,2}=    \sup_{T >0}
   \sup_{\substack{v,x\, \text{satisfy~\eqref{eq:dualPIE}}\\\norm{v}_{L_2[0,T]}=1} } \|z (T)\|.\]
      Suppose there exist $\mcl P\succcurlyeq \epsilon I, \gamma>0$ satisfying 
      Eqn.~\eqref{eq:lpi3b}. 
        Now, we apply Lem.~\ref{lem:schur}, using $\mcl U =\mcl{B^*} \in \mcl L(X)$, $\mcl R=\mcl{TPT^*} \in \mcl L(X)$, and $\mcl S=\gamma I \in \mcl L(X)$. Since $\gamma I \succcurlyeq \epsilon I$ for any $\epsilon\in(0,\gamma]$, the conditions of Lem.~\ref{lem:schur} are satisfied. 
        We have 
\[
  \bmat{\mcl S&\mcl U\\\mcl U^*&\mcl R}=\bmat{\gamma I & \mcl B^*\\\mcl B & \mcl{TPT^*}}\succcurlyeq 0,
\]        
    and hence, statement 1) of Lem.~\ref{lem:schur} holds. This implies
    \[
     \mcl{TPT^*} -\gamma^{-1}\mcl{BB^*} =\mcl R- \mcl U^* \mcl S^{-1}\mcl U  \succcurlyeq 0.
    \]
 Next, let $\hat{\mcl P} =\mcl P\gamma$. Then, we have that $\hat{\mcl P} \succcurlyeq \gamma \epsilon I$, and
\begin{align*}
          \bmat{-\gamma^2 I & \mcl{C\hat PT^*}\\ \mcl{T \hat P C^*} & \mcl{A\hat PT^*+T\hat PA^*}} &\preccurlyeq0, \;\mcl{T\hat PT^*} -\mcl{BB^*}\succcurlyeq 0,
\end{align*}
and hence $\mcl{\hat P}=\mcl P \gamma$, and $\gamma^2$ satisfy the conditions of Cor.~\ref{cor:LPI_2,i_dual}. This implies $\norm{G(\mcl T,\mcl A,0,\mcl C,\mcl B)}_{2,i}^2\leq \gamma^2$, and hence $\norm{G(\mcl T,\mcl A,0,\mcl C,\mcl B)}_{2,i}\leq \gamma$ is proven.
\end{proof}

\textbf{Note:} Items $1)$ of Thm.~\ref{thm:LPIs_2,i_dilated} and Thm.~\ref{thm:LPI_2,i} can be shown to be equivalent. 
Equivalence between Thm.~\ref{thm:LPI_2,i} and statement $2)$ of Thm.~\ref{thm:LPIs_2,i_dilated}, however, is precluded by that fact that the map from $X$ to $D\subset X$, where $D$ is a Sobolev subspace, given by the PIE operator $\mcl T \in \mcl L(X)$ is not an isomorphism (even though it is bijective onto $D$, as shown in~\cite{peet2021aut}): $\mcl T^{-1}$ is not a bounded operator. If $\mcl T^{-1} \in \mcl L(X)$, then the change of variables $\mcl {\hat P}=(\mcl{T^*PT})^{-1}$ provides equivalence between statements $1)$ and $2)$ of Thm.~\ref{thm:LPIs_2,i_dilated}. Hence, in that restrictive case, $2)$ of Thm.~\ref{thm:LPIs_2,i_dilated} is also equivalent to Thm.~\ref{thm:LPI_2,i}.

\subsection{Primal and Dual LPIs for $\|G\|_{\infty,2}$}

In this subsection, we apply the duality result of Thm.~\ref{thm:dualnorm} to Thm.~\ref{thm:LPI_2,i}, generalizing the LMI Lem.~2.1 of~\cite{rotea1993}.
\begin{cor}\label{cor:LPI_inf,2_dual}
    Given $\mcl{T,A,B,C} \in \PI$, suppose there exist $\mcl P=\mcl P^* \in \PI$, and $\gamma,\epsilon>0$, such that $\mcl P \succcurlyeq \epsilon I$, satisfying\begin{align}
\mathcal A\mathcal P\mathcal T^*
+\mathcal T\mathcal P\mathcal A^*
+
\mathcal B\mathcal B^*
&\preccurlyeq 0, \;\; 
\mcl{CPC^*} 
\preccurlyeq \gamma I.
\label{eq:lpi4b}
\end{align}
Then, $\norm{G(\mcl{T,A,B,C},0)}_{\infty,2}^2 \leq \gamma$. 
\end{cor}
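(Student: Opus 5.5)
The plan is to mirror the proof of Cor.~\ref{cor:LPI_2,i_dual}, with the roles of the two proxies swapped. I would apply Thm.~\ref{thm:LPI_2,i} to the dual tuple and then transfer the bound back using the norm duality of Thm.~\ref{thm:dualnorm}.

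\textbf{Step 1 (dual bound).} Define $\mcl T'=\mcl T^*$, $\mcl A'=\mcl A^*$, $\mcl B'=\mcl C^*$ and $\mcl C'=\mcl B^*$. Since PI operators are bounded, $(\mcl X^*)^*=\mcl X$ for each of them. Substituting gives
\[
(\mcl A')^*\mcl P\mcl T'+(\mcl T')^*\mcl P\mcl A'+(\mcl C')^*\mcl C'=\mcl{APT^*}+\mcl{TPA^*}+\mcl{BB^*}\preccurlyeq 0,
\]
and
\[
(\mcl B')^*\mcl P\mcl B'=\mcl{CPC^*}\preccurlyeq\gamma I .
\]
Together with $\mcl P\succcurlyeq\epsilon I$, these are exactly the hypotheses of Thm.~\ref{thm:LPI_2,i} for the tuple $\{\mcl T',\mcl A',\mcl B',\mcl C'\}$. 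Hence $\norm{G(\mcl T',\mcl A',0,\mcl C',\mcl B')}_{2,i}^2\le\gamma$, that is, $\norm{G(\mcl T^*,\mcl A^*,0,\mcl B^*,\mcl C^*)}_{2,i}^2\le\gamma$.

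\textbf{Step 2 (duality transfer).} Next I invoke Thm.~\ref{thm:dualnorm} with the tuple $\{\mcl T',\mcl A',\mcl B',\mcl C'\}$ in place of $\{\mcl T,\mcl A,\mcl B,\mcl C\}$. This gives
\[
\norm{G(\mcl T',\mcl A',0,\mcl C',\mcl B')}_{2,i}=\norm{G((\mcl T')^*,(\mcl A')^*,(\mcl C')^*,(\mcl B')^*,0)}_{\infty,2}.
\]
Here $(\mcl C')^*=\mcl B$ and $(\mcl B')^*=\mcl C$, so the right-hand side is $\norm{G(\mcl T,\mcl A,\mcl B,\mcl C,0)}_{\infty,2}$. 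Combining this with Step 1 yields the claim $\norm{G(\mcl{T,A,B,C},0)}_{\infty,2}^2\le\gamma$.

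\textbf{Main obstacle.} The delicate point is the standing assumption of Thm.~\ref{thm:dualnorm} that both $G(\mcl T^*,\mcl A^*,0,\mcl B^*,\mcl C^*)$ and $G(\mcl T,\mcl A,\mcl B,\mcl C,0)$ are well-posed. I would assume this as part of the paper's blanket hypothesis that PIEs come from well-posed PDE or delay systems, just as Cor.~\ref{cor:LPI_2,i_dual} implicitly does. The remaining content is the bookkeeping of adjoints to confirm that the input and output roles land in the right slots of $G(\cdot)$; this is routine.
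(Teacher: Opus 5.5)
Your proposal is correct and is essentially the paper's own proof: the same substitution $\mcl T'=\mcl T^*$, $\mcl A'=\mcl A^*$, $\mcl B'=\mcl C^*$, $\mcl C'=\mcl B^*$ verifies the hypotheses of Thm.~\ref{thm:LPI_2,i}, and Thm.~\ref{thm:dualnorm} together with $(\mcl X^*)^*=\mcl X$ then transfers the bound back to $G(\mcl T,\mcl A,\mcl B,\mcl C,0)$. Your remark that Thm.~\ref{thm:dualnorm} needs $G(\mcl T^*,\mcl A^*,0,\mcl B^*,\mcl C^*)$ and $G(\mcl T,\mcl A,\mcl B,\mcl C,0)$ to be well-posed is a hypothesis the paper leaves implicit.
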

\begin{proof}
We prove that if 
Eqn.~\eqref{eq:lpi4b} hold for some $\mcl P\succcurlyeq \epsilon I, \gamma>0$, then $\norm{G(\mcl{T^*,A^*},0,\mcl B^*,\mcl C^*)}_{2,i}^2 \leq \gamma$. We use duality, in Thm.~\ref{thm:dualnorm}, to assert that $\norm{G(\mcl{T,A,B,C},0)}_{\infty,2}^2 \leq \gamma$. 
Suppose there exist $\mcl P\succcurlyeq \epsilon I, \gamma>0$ satisfying 
Eqn.~\eqref{eq:lpi4b}.
Let $\mcl{T'=T^*}$, $\mcl{A'=A^*}$, $\mcl{B'=C^*}$, and $\mcl{C'=B^*}$. By Thm.~11.3 of Chapter II of~\cite{gohbergVol1}, $(A^*)^*=A$ for any $A\in \mcl L(X)$. Since $\mcl{T,A,B,C} \in \PI \subset \mcl L(X)$, we have $(\mcl T')^*=\mcl T$, $(\mcl A')^*=\mcl A$, $(\mcl C')^*=\mcl B$, and $(\mcl B')^*=\mcl C$ and consequently, conditions of  Thm.~\ref{thm:LPI_2,i} hold,
\begin{align*}
    (\mathcal A')^*\mathcal P\mathcal T'
+(\mathcal T')^*\mathcal P\mathcal A'
+(\mathcal C')^*\mathcal C'
\preccurlyeq 0,\;
\mcl{(B')^*PB'} 
\preccurlyeq \gamma I,
\end{align*}
and hence we conclude that \(\norm{G(\mcl T',\mcl A',0,\mcl C',\mcl B')}_{2,i}^2\leq \gamma.\) 
Lastly, from Thm.~\ref{thm:dualnorm}, we have that
\begin{align*}
    \gamma&\ge \norm{G(\mcl T'\!,\!\mcl A'\!,0,\!\mcl C'\!,\!\mcl B')}_{2,i}^2\\
    &=\norm{G((\mcl T')^*\!,\!(\mcl A')^*\!,\!(\mcl C')^*\!,\!(\mcl B')^*\!,\!0)}_{\infty,2}^2\\&=\norm{G(\mcl{(T^*)^*,(A^*)^*,(B^*)^*,(C^*)^*},0)}_{\infty,2}^2\\
    &=\norm{G(\mcl{T,A,B,C},0)}_{\infty,2}^2.
\end{align*}
\end{proof}

We now dilate the LPIs from Thm.~\ref{thm:LPI_inf,2} and Cor.~\ref{cor:LPI_inf,2_dual} combined with the norm duality result in Thm.~\ref{thm:dualnorm} to obtain the following primal/dual characterization of $\|G\|_{\infty,2}$. 

\begin{thm}\label{thm:LPIs_inf,2_dilated}
    Given $\mcl{T,A,B,C} \in \PI$, suppose at least one of the following statements hold.
    \begin{enumerate}
        \item There exist $\mcl P=\mcl P^* \in \PI$, and $\gamma,\epsilon>0$, such that $\mcl P \succcurlyeq \epsilon I$, satisfying
\begin{equation}\label{eq:lpi5b}
\hspace{-6mm}\bmat{-\gamma I & \mcl B^* \\\mcl B&\mathcal A\mathcal P\mathcal T^*
+\mathcal T\mathcal P\mathcal A^*}\preccurlyeq 0
\quad 
\bmat{\mcl P  &\mcl{PC^*}\\ \mcl{CP}&\gamma I } \succcurlyeq 0
\end{equation}
\item  There exist $\mcl P=\mcl P^* \in \PI$, and $\gamma,\epsilon>0$, such that $\mcl P \succcurlyeq \epsilon I$, satisfying
\begin{equation}\label{eq:lpi6b}
\hspace{-6mm}\bmat{-\gamma I & \mcl{B^*PT}\\ \mcl{T P B} & \mcl{A^*PT+T^*PA}} \preccurlyeq0
\quad 
\bmat{\gamma I & \mcl C\\\mcl C^* & \mcl{T^*PT}}\succcurlyeq 0
\end{equation}

    \end{enumerate}

Then, $\norm{G(\mcl{T,A,B,C},0)}_{\infty,2} \leq \gamma$. 
\end{thm}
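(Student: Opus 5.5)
The plan is to mirror the proof of Thm.~\ref{thm:LPIs_2,i_dilated}, treating the two statements separately. Each dilated LPI is reduced by Lem.~\ref{lem:schur} to its undilated form. After a rescaling $\hat{\mcl P}=\gamma\mcl P$, the result then follows from Cor.~\ref{cor:LPI_inf,2_dual} (for statement 1)) or from Thm.~\ref{thm:LPI_inf,2} (for statement 2)), with $\gamma^2$ in place of $\gamma$. Taking square roots of $\norm{G(\mcl{T,A,B,C},0)}_{\infty,2}^2\le\gamma^2$ gives the claim.

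\textbf{Statement 1).} First negate the left inequality in~\eqref{eq:lpi5b} and apply Lem.~\ref{lem:schur} with $\mcl S=\gamma I$, $\mcl U=-\mcl B^*$ and $\mcl R=-(\mcl{APT^*+TPA^*})$. Coercivity of $\mcl S$ holds with any $\epsilon\in(0,\gamma]$. This yields
\[
\mcl{APT^*+TPA^*}+\gamma^{-1}\mcl{BB^*}\preccurlyeq 0 .
\]
Next apply Lem.~\ref{lem:schur} to the right inequality with $\mcl S=\mcl P\succcurlyeq\epsilon I$, $\mcl U=\mcl{PC^*}$ and $\mcl R=\gamma I$. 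This gives $\gamma I-\mcl{CPP}^{-1}\mcl{PC^*}=\gamma I-\mcl{CPC^*}\succcurlyeq0$. Multiplying the first inequality by $\gamma$ and setting $\hat{\mcl P}=\gamma\mcl P\succcurlyeq\gamma\epsilon I$, we obtain
\[
\mcl{A\hat PT^*+T\hat PA^*+BB^*}\preccurlyeq0,\qquad \mcl{C\hat PC^*}\preccurlyeq\gamma^2 I .
\]
These are exactly~\eqref{eq:lpi4b} with $\gamma^2$ in place of $\gamma$. Cor.~\ref{cor:LPI_inf,2_dual}, which already incorporates the duality of Thm.~\ref{thm:dualnorm}, then gives $\norm{G(\mcl{T,A,B,C},0)}_{\infty,2}^2\le\gamma^2$.

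\textbf{Statement 2).} Apply Lem.~\ref{lem:schur} to the right inequality in~\eqref{eq:lpi6b} with $\mcl S=\gamma I$, $\mcl U=\mcl C$ and $\mcl R=\mcl{T^*PT}$. This yields
\[
\mcl{T^*PT}-\gamma^{-1}\mcl{C^*C}\succcurlyeq0 .
\]
Again set $\hat{\mcl P}=\gamma\mcl P$. The left inequality in~\eqref{eq:lpi6b} is homogeneous in $(\gamma,\mcl P)$ apart from the $-\gamma I$ block. Multiplying it by $\gamma$ gives
\[
\bmat{-\gamma^2 I & \mcl{B^*\hat PT}\\ \mcl{T^*\hat PB} & \mcl{A^*\hat PT+T^*\hat PA}}\preccurlyeq 0,
\]
together with $\mcl{C^*C}-\mcl{T^*\hat PT}\preccurlyeq0$. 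These are the hypotheses of Thm.~\ref{thm:LPI_inf,2} with $\gamma^2$, so again $\norm{G(\mcl{T,A,B,C},0)}_{\infty,2}\le\gamma$.

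\textbf{Main obstacle.} The only delicate point is bookkeeping in the Schur complement steps. One must check the sign conventions (negating the $\preccurlyeq0$ block before invoking Lem.~\ref{lem:schur}) and confirm that $\mcl S$ is coercive in each application. One must also ensure that scaling a negative semidefinite block operator by $\gamma>0$ preserves the inequality.

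There is a notational issue in~\eqref{eq:lpi6b}: the off-diagonal block is printed as $\mcl{TPB}$. It should be read as $(\mcl{B^*PT})^*=\mcl{T^*PB}$, since the block operator must be self-adjoint; I will state this interpretation explicitly.

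Finally, the operators $\mcl B,\mcl C$ map between $\R^{n}$ and $L_2$, and all operators lie in $\PI\subset\mcl L(X)$. Lem.~\ref{lem:schur} therefore applies on the appropriate product Hilbert spaces.
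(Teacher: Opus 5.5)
Your proof is correct, and you read the off-diagonal block of~\eqref{eq:lpi6b} correctly as $\mcl T^*\mcl P\mcl B$; this is exactly the block $\mcl T'\mcl P(\mcl C')^*$ that appears in the paper's argument.

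Your route differs from the paper's, however. The paper performs no Schur complement here. It sets $\mcl T'=\mcl T^*$, $\mcl A'=\mcl A^*$, $\mcl B'=\mcl C^*$, $\mcl C'=\mcl B^*$ and observes that~\eqref{eq:lpi5b} and~\eqref{eq:lpi6b} are verbatim statements 1) and 2) of Thm.~\ref{thm:LPIs_2,i_dilated} for $G(\mcl T',\mcl A',0,\mcl C',\mcl B')$. This gives $\norm{G(\mcl T',\mcl A',0,\mcl C',\mcl B')}_{2,i}\le\gamma$, which it transfers back with Thm.~\ref{thm:dualnorm} and $\mcl T^{**}=\mcl T$, etc. You instead undilate first and land on the undilated energy-to-peak results.

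For statement 1), the two arguments use the same ingredients in a different order: a Schur complement, Thm.~\ref{thm:LPI_2,i} on the dual system, and one application of duality.

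For statement 2), your argument is more economical.
\begin{itemize}
\item The paper's chain goes through Thm.~\ref{thm:LPIs_2,i_dilated} 2) and hence Cor.~\ref{cor:LPI_2,i_dual}. Internally, that already produces $\norm{G(\mcl T,\mcl A,\mcl B,\mcl C,0)}_{\infty,2}\le\gamma$ via Thm.~\ref{thm:LPI_inf,2}.
\item The paper then dualizes to the impulse-to-energy norm and back again. It therefore invokes Thm.~\ref{thm:dualnorm} twice and relies on well-posedness of both systems.
\item You stop at Thm.~\ref{thm:LPI_inf,2} applied to the original system, so no duality is needed.
\end{itemize}

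Your computation also gives a small extra. Lem.~\ref{lem:schur} is an equivalence, and the rescaling $\hat{\mcl P}=\gamma\mcl P$, $\gamma\mapsto\gamma^2$ is invertible. Hence statement 2) is equivalent to the hypotheses of Thm.~\ref{thm:LPI_inf,2}, and statement 1) is equivalent to those of Cor.~\ref{cor:LPI_inf,2_dual}. This is the energy-to-peak counterpart of the Note following Thm.~\ref{thm:LPIs_2,i_dilated}.

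What the paper's version buys is brevity and an explicit display of the symmetry in Table~\ref{tab:input-output-norms}: the dilated energy-to-peak LPIs are literally the dilated impulse-to-energy LPIs of the dual system.
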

\begin{proof} We use the duality result of Thm.~\ref{thm:dualnorm}, to prove that the LPIs in statements $1)$ and $2)$ imply $\norm{G(\mcl{T,A,B,C},0)}_{\infty,2} \leq \gamma$. First, we show $1)$ implies $\norm{G(\mcl T',\mcl A',0,\mcl C',\mcl B')}_{2,i} \leq \gamma$.

Suppose $1)$ holds for some $\mcl P\succcurlyeq \epsilon I,\gamma>0$. Then, $\mcl P, \gamma$ satisfy Eqn.~\eqref{eq:lpi5b}.

Now, let $\mcl{T'=T^*}$, $\mcl{A'=A^*}$, $\mcl{B'=C^*}$, and $\mcl{C'=B^*}$. Then, statement $1)$ of Thm.~\ref{thm:LPIs_2,i_dilated} hold for $G(\mcl T',\mcl A',0,\mcl C',\mcl B')$:
\begin{align*}
\bmat{-\gamma I & \mcl C' \\(\mcl C')^* &(\mathcal A')^*\mathcal P\mathcal T'
+(\mathcal T')^*\mathcal P\mathcal A'}
\!\preccurlyeq\! 0, \;\bmat{\gamma I&(\mcl B')^*\mcl P\\ \mcl{P} \mcl B'&\mcl P} \!\succcurlyeq\! 0,
\end{align*}
which implies that \(\norm{G(\mcl T',\mcl A',0,\mcl C',\mcl B')}_{2,i}\leq \gamma.\)  

Next, we show that $2)$ implies $\norm{G(\mcl T',\mcl A',0,\mcl C',\mcl B')}_{2,i} \leq \gamma$. 
Suppose $2)$ holds for some $\mcl P\succcurlyeq \epsilon I,\gamma>0$. Then, $\mcl P, \gamma$ satisfy Eqn.~\eqref{eq:lpi6b}. Hence, statement $2)$ of Thm.~\ref{thm:LPIs_2,i_dilated} hold for $G(\mcl T',\mcl A',0,\mcl C',\mcl B')$. Specifically, we have that
{\small
\begin{align*}
       \bmat{-\gamma I &\hspace{-2 mm} \mcl{C'P(T')^*}\\ \mcl{T' P (C')^*} &\hspace{-2 mm} \mcl{A'P(T')^*+T'P(A')^*}} \preccurlyeq0,
         \bmat{\gamma I &\hspace{-1 mm} (\mcl B')^*\\\mcl B' &\hspace{-1 mm} \mcl{T'P(T')^*}}\succcurlyeq 0,
\end{align*}}
which implies that \(\norm{G(\mcl T',\mcl A',0,\mcl C',\mcl B')}_{2,i}\leq \gamma.\)  

Now, we are in position to invoke Thm.~\ref{thm:dualnorm}, using $G(\mcl T',\mcl A',0,\mcl C',\mcl B')$, which states that
\begin{align*}
    \norm{G(\mcl T',\mcl A',0,\mcl C',\mcl B')}_{2,i}&=\norm{G(\mcl T^*,\mcl A^*,0,\mcl B^*,\mcl C^*)}_{2,i}\\&\hspace{-9mm}=\norm{G(\mcl{(T')^*,(A')^*,(B')^*,(C')^*},0)}_{\infty,2}. 
\end{align*}

Since $\mcl{T,A,B,C} \in \PI \subset \mcl L(X)$, we have that, by Thm.~11.3 of Chapter II of~\cite{gohbergVol1}, $(\mcl T')^*=\mcl T^{**}=\mcl T$, $(\mcl A')^*=\mcl A^{**}=\mcl A$, $(\mcl B')^*=\mcl B^{**}=\mcl B$, and $(\mcl C')^*=\mcl C^{**}=\mcl C$, and hence 
\[
\norm{G(\mcl T',\mcl A',0,\mcl C',\mcl B')}_{2,i}=\norm{G(\mcl{T,A,B,C},0)}_{\infty,2}.
\]

Since both $1)$ and $2)$ imply $\norm{G(\mcl T',\mcl A',0,\mcl C',\mcl B')}_{2,i}\leq \gamma$, both imply $\norm{G(\mcl{T,A,B,C},0)}_{\infty,2}\leq \gamma$, as desired.
\end{proof}

\section{Numerical Results}\label{sec:numerics}

This section provides numerical validation of the theoretical results. This includes establishing the veracity of Eq.~\eqref{eq:intertwining2} 
as well as an application of all LPI results. 
In all cases, numerical simulation and LPI solving are achieved via the features offered by the PIETOOLS~\cite{manual2025} Matlab toolbox.

\subsection{Verifying Eqn.~\eqref{eq:intertwining2} via numerical simulations}

We verify the intertwining identity by numerically integrating $x$, and $v$, which satisfy the primal PIE Eqn.~\eqref{eq:PIE} and $\bar x$, $\bar v$, which satisfy the dual PIE Eqn.~\eqref{eq:dualPIE}. The numerical residual of the intertwining relationship, defined below as $\err$, is then an empirical metric for validation of the identity.\[
\err(T) = \bl(\mcl B^*\bar x(T) \br)^T v- \ip{\bar v}{\mcl C x}_{L_2[0,T]}.
\]
For the following two PDE systems, the initial condition $v=1$ (for 
primal) and input $\bar{v}(T-t) = \frac{1}{\sqrt{T}}$ (for the dual) are used. The resulting error, $\err$, for various time horizons $T$ are reported in Table \ref{tab:Intertwining}.\begin{itemize}
    \item[\bf E1.] $1$D Transport equation:
    {\small
    \begin{align}
    \hspace{- 6 mm} \partial_t x(t,s) \!=\! -\partial_sx(t,s) +sw(t), \,
    z(t) = \!\int_0^1 \!x(t,s) ds,\; x(t,0) =0 \label{eq:E1}
\end{align}}
\item[\bf E2.] $1$D Heat equation: 
\begin{align}
    &\partial_t x(t,s) = \partial_s^2x(t,s)-x(t,s)+(s-s^2)w(t)\nonumber\\
    &z(t) = \int_0^1 x(t,s) ds,\;x(t,1)=x(t,0) = 0\label{eq:E2}
\end{align}
\end{itemize}
\begin{table}[!h]
    \centering
    \begin{tabular}{|p{0.9cm}|p{1.35cm}|p{1.35cm}|p{1.35cm}|p{1.35cm}|}
    \hline
    Example&$err(0.25)$&$err(0.5)$&$err(0.75)$&$err(1)$\\\hline
    \textbf{E1} &$1\cdot 10^{-4}$&$7.1\cdot 10^{-5}$&$5.8\cdot 10^{-5}$&$5.0\cdot 10^{-5}$\\\hline
    \textbf{E2} &$3.3\cdot 10^{-5}$&$2.4\cdot 10^{-5}$&$1.9\cdot 10^{-5}$&$1.7\cdot 10^{-5}$\\\hline
        \end{tabular}
    \caption{Error for Numerically Validated Intertwining Relationship}
    \label{tab:Intertwining}
\end{table}

\subsection{Upper-Bounds on system norms and the $H_2$-norm}\label{subsec:upperbound}


\begin{table*}[!h]
    \centering
    \begin{tabular}{|c|c|c|c|c|c|c|c|c|}
    \hline
      Ex. \# & Thm. \ref{thm:LPI_2,i}\cellcolor{blue!15}  & Cor. \ref{cor:LPI_2,i_dual}\cellcolor{blue!15} &Thm. \ref{thm:LPIs_2,i_dilated} 1) \cellcolor{blue!15} & Thm. \ref{thm:LPIs_2,i_dilated}\cellcolor{blue!15}  2)&Thm. \ref{thm:LPI_inf,2}\cellcolor{red!15} & Cor. \ref{cor:LPI_inf,2_dual}\cellcolor{red!15}& Thm. \ref{thm:LPIs_inf,2_dilated} 1)\cellcolor{red!15} & Thm. \ref{thm:LPIs_inf,2_dilated} 2)\cellcolor{red!15}\\\hline
        1&3.6724 & N/A&3.6724 &N/A &3.6725&3.6724&3.6724 &3.6725\\\hline
        2&3.8300 &N/A &3.8300 &N/A &3.8300&3.8304&3.8304 &3.8300 \\\hline
        3  &0.2236&  0.2237&0.2236 &0.2237 &0.2242&0.2236&0.2237 &0.2242\\\hline
        4 &0.0353 &  0.0358&0.0353&0.0358 & 0.0356&0.0353&0.0353&0.0358 \\\hline
        5 & 1.1412 & 1.1442&1.1412&1.1441 &1.1544&1.1526&1.1526 & 1.1547\\\hline
        \end{tabular}
    \caption{Upper bounds on impulse-to-energy and energy-to-peak norms for various delay and PDE systems in Sec. \ref{subsec:upperbound}. The bounds are obtained by solving the LPIs presented in Secs. \ref{sec:LPIs} and \ref{sec:dilatedLPIs}. The color of column headers indicate whether the bounds are on the~\colorbox{blue!15}{$\norm{G}_{2,i}$} ~or~ the~\colorbox{red!15}{$\norm{G}_{\infty, 2}$} norm.}
    \label{tab:norms}
\end{table*}
Tab.~\ref{tab:norms} compares the results for both norms using the aforementioned theorems and corollaries. Examples 1 and 2 
were taken from~\cite{braghini2023}, as particular cases of the following delay system with N state and output delays $\tau_1<\dots<\tau_N$
\begin{align*}
				  \dot{x}(t) &= A_0  x(t) + \sum_{i=1}^NA_i  x(t-\tau_i) + B u(t), \; y(t)=C_0 x(t)
				\end{align*}

    Conversion of this system to PIEs is automated by the Matlab package PIETOOLS~\cite{manual2025}. For details on the PIE representation of delay systems, we refer to~\cite{peet2021}, whereas for the PIE representation of the PDEs in examples 3 through 5, we refer to~\cite{shivakumar2024}. The LPIs where tested by PIETOOLS with Lyapunov PI variables with polynomial kernels of degree $5$.

    The first important result is the verification of the symmetries of Thm.~\ref{thm:dualnorm} by comparing the first column (Thm.~\ref{thm:LPI_2,i}), to the second column (Cor.~\ref{cor:LPI_2,i_dual}) and checking that, when feasible, they produce the same value, apart from numerical truncation errors, for all the examples tested. For the DDE examples, Cor.~\ref{cor:LPI_2,i_dual}, and Thm.~\ref{thm:LPIs_2,i_dilated} 2) are infeasible. 
    Similarly, the fifth column (Thm.~\ref{thm:LPI_inf,2}) and sixth column (Cor.~\ref{cor:LPI_inf,2_dual}) produce the same result.
						
						
                
 \noindent \textbf{Example 1: }
				First consider the scalar, single delay system, where the non-zero matrices are $A_1=-1$, 
                $B=2$, and $C_0=2$, with the delay $\tau = 0.5$. This system has an analytic bound for the $H_2$ norm, which is $3.6724$. 
                As seen in Table~\ref{tab:norms}, bounds obtained by LPIs closely match the analytical bound.
			
			 \noindent \textbf{Example 2: }
				Next, consider the two-delay system with $\tau_1 = \pi/10$, $\tau_2 = 1$, so that the delays are incommensurate, and
				\begin{align*}
					&A_0 = \bmat{
						-1 & 1 & 2\\
						1 & -3 & 2\\
						0 & 0 & -1
					},\qquad  A_1 = \frac{1}{5}\bmat{
						-3 & 0 & 1\\
						1 & -2 & 0\\
						0 & 2 & -2
					},\\
					& A_2 =  \frac{1}{5}\bmat{
						-4 & 1 & 0\\
						0 & -2 & 1\\
						2 & 1 & -3
					}, \quad B = \bmat{	1 \\ 1 \\1}, \; C_0 = \bmat{1 & 1 &1 }.
				\end{align*}
				The method in~\cite{jarlebring2011} and the bounds from the LPIs agree to four significant figures. However, 10th order Pad\'e approximation yields a 
                higher value of of $3.9275$ for the $H_2$--norm. 
\noindent \textbf{Example 3: Pure Convection.}
This example again considers Eqns. \eqref{eq:E1}. 
An analytical bound is obtained by choosing $v=1$, and setting the initial condition on the PDE state, $\mcl{T}x(0) = \mcl Bv = \mcl B$. Then, from standard PDE theory, analytical solutions for the state as a function of time are known. Finally, the $L_2$ norm of this solution is calculated to obtain $\norm{G}_{2,i} = 0.2236$. For numerical bounds, all LPI tests and their corresponding results are shown in Tab. \ref{tab:norms}. The analytical bound and those shown in Tab. \ref{tab:norms} agree.

\noindent \textbf{Example 4: Reaction-Diffusion.}
Next, we consider the reaction-diffusion PDE described in Eqns. \eqref{eq:E2}.
LPI results for this example are again presented in Tab. \ref{tab:norms}. An analytical bound for the $H_2$--norm, derived from eigenfunction expansion of the solution, is $0.0353$. All the bounds obtained by solving the LPIs, proposed in the paper, agree with the analytical bound and are accurate up to 3 decimal places.

\noindent \textbf{Example 5: Timoshenko Beam Equations.}
Lastly, we consider a beam with transverse displacement $u(t,s)$ and cross-section rotation angle $\phi(t,s)$. Considering two disturbance inputs $w_1(t)$ and $w_2(t)$, The Timoshenko beam model relates these quantities by the following equations.
\begin{align}
&\ddot{u} = \partial_s(u_s - \phi)-0.1\dot{u}+sw_1, \;\;  \nonumber\\&\ddot{\phi} = \phi_{ss} -0.1\dot{\phi}+(u_s - \phi)+sw_2,  \nonumber 
\\ &\phi(t,0) = u(t,0) = 0, \; \phi_s(t,1) = 0, \; u_s(t,1) - \phi(t,1) = 0. \nonumber
\end{align}

These equations have the first-order in time, hyperbolic representation with $\mbf v = \bmat{
    \dot{u},  u_s -\phi, \dot{\phi}, \phi_s}$,

\begin{align*}
    &\dot{\mbf v}(t,s) \!\!=\!\! -\!\begin{bmatrix}
        0.1 \!&\! 0\!&\!0\!&\!0\\ 0\!&\!0\!&\!1\!&\!0\\ 0 \!&\!-1\! &\!0.1\!&\!0\\ 0\!&\!0&\!0\!&\!0
    \end{bmatrix} \mathbf{v}(t,s) \!+\! \begin{bmatrix}
        0&\!1&\!0&\!0\\ 1&\!0&\!0&\!0\\ 0&\!0&\!0&\!1\\ 0&\!0&\!1&\!0
    \end{bmatrix} \partial_s \mbf{v}(t,s) \\&\hspace{2cm}+ \begin{bmatrix}
        s&0&0&0\\0&0&s&0
    \end{bmatrix}^T \bmat{w_1(t)\\w_2(t)}\nonumber,    
\end{align*} with boundary conditions 
$\mathbf{v}_1(t,0) = \mathbf{v}_3(t,0) = \mathbf{v}_4(t,1) = \mathbf{v}_2(t,1) = 0 $. We consider the two outputs
\[
\bmat{z_1(t)\\z_2(t)} = \int_0^1 \bmat{
        1 & 0&0&0\\ 0 &0 &1&0
    } \mathbf{v}(t,s) ds.\]
The results of Table~\ref{tab:norms} show that, as expected for multiple-input-multiple-output systems, the two proxy norms do not match, although both are lower bounds to the $H_2$--norm, from Prop.~\ref{prop:H2equi_PIE}. Furthermore, Prop.~\ref{prop:H2equi_PIE} states that the $H_2$ norm is upper bounded by $\sqrt{n_w}\|G\|_{2,i}=\sqrt{2}\|G\|_{2,i}$ and $\sqrt{n_z}\|G\|_{\infty,2}=\sqrt{2}\|G\|_{2,i}$. The upper-bounds of Table~\ref{tab:norms} for the proxy norms give an upper-bound of $\sqrt{2}\|G\|_{2,i}\approx1.6139$. Applying finite element discretization to the Timoshenko beam equation, with $160$ elements, the $H_2$--norm was computed as $1.6077$.

\section{Conclusions}\label{sec:conclusion}
\vspace{-1mm}

Primal and dual characterization of the $H_2$-norm is essential for controller synthesis. For systems without well-defined transfer functions, however, such characterizations are not available. In this paper, a duality is established between two proxies for $H_2$ norm -- the impulse-to-energy (A) and the energy-to-peak norms (B). This duality is established for the class of linear Partial Integral Equations used to represent linear PDEs and time-delay systems. Specifically, for each norm, we propose a map from primal system (with norm A/B) to dual system (with norm B/A) and show that in both cases the A/B norm of the primal is the B/A norm of the dual. This is established through an intertwining theorem relating primal and reverse-time dual solutions. Using this duality, for each norm, both primal and dual convex operator inequalities are proposed which can be tested using convex optimization. Both the intertwining relation and LPI-obtained norm bounds are verified numerically on linear PDE and time-delay systems.\vspace{-5mm}

\bibliographystyle{IEEEtran}

\bibliography{ACC_27_duality}

\end{document}